\documentclass{article}
%%%%%%%%%%%%%%%%%%%%%%%%%%%%%%%%%%%%%%%%%%%%%%%%%%%%%%%%%%%%%%%%%%%%%%%%%%%%%%%%%%%%%%%%%%%%%%%%%%%%%%%%%%%%%%%%%%%%%%%%%%%%%%%%%%%%%%%%%%%%%%%%%%%%%%%%%%%%%%%%%%%%%%%%%%%%%%%%%%%%%%%%%%%%%%%%%%%%%%%%%%%%%%%%%%%%%%%%%%%%%%%%%%%%%%%%%%%%%%%%%%%%%%%%%%%%
\usepackage{amssymb}
\usepackage{amsfonts}
\usepackage{amsmath}

\setcounter{MaxMatrixCols}{10}
%TCIDATA{OutputFilter=LATEX.DLL}
%TCIDATA{Version=5.50.0.2953}
%TCIDATA{<META NAME="SaveForMode" CONTENT="1">}
%TCIDATA{BibliographyScheme=Manual}
%TCIDATA{Created=Saturday, November 23, 2013 22:32:12}
%TCIDATA{LastRevised=Monday, May 30, 2016 22:49:08}
%TCIDATA{<META NAME="GraphicsSave" CONTENT="32">}
%TCIDATA{<META NAME="DocumentShell" CONTENT="Standard LaTeX\Blank - Standard LaTeX Article">}
%TCIDATA{Language=American English}
%TCIDATA{CSTFile=40 LaTeX article.cst}

\newtheorem{theorem}{Theorem}

\newtheorem{example}[theorem]{Example}

\newtheorem{lemma}[theorem]{Lemma}

\newtheorem{remark}[theorem]{Remark}

\newenvironment{proof}[1][Proof]{\noindent\textbf{#1.} }{\ \rule{0.5em}{0.5em}}
\input{tcilatex}
\begin{document}

\title{Existence and multiplicity results for boundary value problems
connected with the discrete $p\left( \cdot \right) -$Laplacian on weighted
finite graphs}
\author{Marek Galewski and Renata Wieteska}
\date{}
\maketitle

\begin{abstract}
We use the direct variational method, the Ekeland variational principle, the
mountain pass geometry and Karush-Kuhn-Tucker theorem in order to
investigate existence and multiplicity results for boundary value problems
connected with the discrete $p\left( \cdot \right) -$Laplacian on weighted
finite graphs. Several auxiliary inequalities for the discrete $p\left(
\cdot \right) -$Laplacian on finite graphs are also derived. Positive
solutions are considered.
\end{abstract}

Keywords: weighted graph; $p(\cdot )-$Laplacian on a graph; critical point
theory; existence and multiplicity

\section{Introduction}

In this note we will consider the following boundary value problem, namely 
\begin{equation}
\left\{ 
\begin{array}{l}
-\Delta _{p(x),\omega }u(x)+q(x)\left\vert u(x)\right\vert
^{p(x)-2}u(x)=\lambda f(x,u(x)),\text{ \ }x\in S,\bigskip \\ 
u(x)=0,\text{ \ \ \ \ \ \ \ \ \ \ \ \ \ \ \ \ \ \ \ \ \ \ \ \ \ \ \ \ \ \ \
\ \ \ \ \ \ \ \ \ \ \ \ \ \ \ \ \ \ \ \ \ \ \ \ }x\in \partial S.%
\end{array}%
\right.  \label{uklad}
\end{equation}%
where $\overline{S}=(S\cup \partial S,V)$ is a simple, connected, undirected
and weighted graph with two finite, disjoint and nonempty sets $S$ and $%
\partial S$ of vertices, called interior and boundary, respectively, and
with a set $V$ of unordered pairs of distinct elements of $S\cup \partial S$
whose elements are called edges, $\omega :\overline{S}\times \overline{S}%
\rightarrow \lbrack 0,+\infty )$ is a weight on a graph $\overline{S}$, $u:%
\overline{S}\rightarrow \mathbb{R}$, $q:S\rightarrow \mathbb{(}0,+\infty )$, 
$p:\overline{S}\rightarrow \mathbb{[}2,+\infty )$ are discrete functions, $%
f: $ $S\times \mathbb{R}\rightarrow \mathbb{R}$ is a continuous function, $%
\Delta _{p(\cdot ),\omega }$ is the discrete $p(\cdot )-$Laplacian defined
on a graph and $\lambda $ is a real positive parameter. The continuity of a
function $f$ means that for any fixed $x\in S$ the function $f\left( x,\cdot
\right) $ is continuous.

The aim of this paper is to investigate the existence and multiplicity of
positive solutions for problem (\ref{uklad}) applying mainly variational
methods, such as the direct variational method, the Ekeland variational
principle, the mountain pass geometry and Karush-Kuhn-Tucker theorem.
Several auxiliary inequalities for the discrete $p\left( \cdot \right) -$%
Laplacian on finite graphs are also derived. Positive solutions are the only
one that have physical meaning. That is why only these are considered.

We note that there is a big difference between discrete problems with the $%
p- $Laplacian and their graph counterparts especially in the case when the
graph is weighted which is the case of this paper. For the discrete problem,
the potential of the $p-$Laplacian, the so called isotropic case, is
coercive and has well recognized relations with any norm we can choose on
the underlying space. For the $p-$Laplacian on the weighted graph there are
no such simple relations contrary to the case of problems on non-weighted
graphs which behave almost like their discrete counterparts. Direct
calculations can be performed in order to ascertain about that. The above
mentioned reasons require considering the term $q(x)\left\vert
u(x)\right\vert ^{p-2}u(x)$ which is crucial if one wishes to apply critical
point theory and variational methods. The relations between this term and
the nonlinearity will allow us to apply classical variational tools
mentioned further in Section \ref{SecPrelimResults}.

We would like to underline that to the best of our knowledge, the discrete $%
p(\cdot )-$Laplacian on finite graphs have not been considered yet. This
means that we had to investigate the problem in a detailed manner which
involves derivation of many auxiliary inequalities that are necessary for
having the relation between the norm and potential of the graph $p\left(
\cdot \right) -$Laplacian, since now we work in the anisotropic case. The
results in the literature cover only the case of the $p-$Laplacian on
graphs, see \cite{Park-Chung, Park}, where however other methods are
applied. Thus our results are new also in the context of constant $p.$
Problems with the graph $p\left( \cdot \right) -$Laplacian called
anisotropic boundary value problems are known to be mathematical models of
various phenomena arising in the study of elastic mechanics (see \cite{B}),
electrorheological fluids (see \cite{A}) or image restoration (see \cite{C}%
). Variational continuous anisotropic problems have been started by Fan and
Zhang in \cite{FanExist1} and later considered by many methods and authors -
see \cite{hasto} for an extensive survey of such boundary value problems. In
the discrete setting see for example \cite{BerJebSer}, \cite%
{IannizottoRadulescu}, \cite{MARCU}, \cite{serban} for the most recent
results. For a background on variational methods we refer to \cite{KRV,
struwe} while for a background on difference equations to \cite{agarwalBOOK}.

We would like to note that we improve here some the existence and
multiplicity results obtained in \cite{art4} for a discrete boundary value
problem, see for example Theorem \ref{Ekeland} which in \cite{art4} requires
some assumption on the growth of nonlinearity which is not assumed here. As
concerns Theorem \ref{The_one_solution} one can derive its counterpart for
the problem considered in \cite{art4}.

The paper is organized as follows. Firstly, after providing basic
information about the graph theoretic notions, we recall the fundamental
tools from critical point theory, which cover the Weierstrass Theorem, the
Mountain Pass Lemma, the Ekeland Variational Principle and also
Karush-Kuhn-Tucker Theorem. Next we provide several inequalities useful in
variational investigations of our problem. Then we give a variational
formulation of the considered problem. The existence of positive solutions
is investigated in the last section. In the first step we establish
conditions under which we can obtain the existence of at least one positive
solution. We apply the direct variational method and Ekeland variational
principle. In the second step we are interested in the multiplicity of
positive solutions. Using the mountain pass technique both with the Ekeland
variational principle and Karush-Kuhn-Tucker conditions we obtain the
existence of at least two distinct positive solutions. The examples are also
provided.

\section{Preliminary results\label{SecPrelimResults}}

In this section we provide some tools which are used throughout the paper.
We start with providing basic information about the graph theoretic notions.

Let $\overline{S}=(S\cup \partial S,V)$ be a simple, connected, undirected
and weighted graph. A weight on a graph $\overline{S}$ is a function $\omega
:\overline{S}\times \overline{S}\rightarrow \lbrack 0,+\infty )$ satisfying%
\newline
$(i)$ $\ \omega (x,y)=\omega (y,x)$ if $\{x,y\}\in V;$\newline
$(ii)$ $\omega (x,y)=0$ if and only if $\{x,y\}\notin V.$

Let $u:\overline{S}\rightarrow \mathbb{R}$ and $p:\overline{S}\rightarrow 
\mathbb{(}1,+\infty ).$ The $p(\cdot )-$gradient $\nabla _{p(\cdot ),\omega
} $ of the function $u$ is defined by%
\begin{equation*}
\nabla _{p(x),\omega }u(x):=\left( D_{p(x),\omega ,y}u(x)\right) _{y\in 
\overline{S}}\text{ \ \ for all }x\in \overline{S},
\end{equation*}%
where%
\begin{equation*}
D_{p(x),\omega ,y}u(x):=\left\vert u(y)-u(x)\right\vert ^{p(x)-2}(u(y)-u(x))%
\sqrt{\omega (x,y)}\text{ \ for all }x\in \overline{S}
\end{equation*}%
there is the $p(\cdot )-$directional derivative\ of the function\ $u\mathbb{%
\ }$in the direction $y.$ In case of $p(\cdot )\equiv 2$ we write $\nabla
_{\omega }.$\newline
The discrete $p(\cdot )-$Laplacian $\Delta _{p(\cdot ),\omega }$ of the
function $u$ is defined by%
\begin{equation*}
\Delta _{p(x),\omega }u(x):=\underset{y\in \overline{S}}{\tsum }\left\vert
u(y)-u(x)\right\vert ^{p(x)-2}(u(y)-u(x))\omega (x,y)\text{ \ for all }x\in 
\overline{S}.
\end{equation*}%
The integration of the function $u$ on a graph $\overline{S}$ is defined by%
\begin{equation*}
\underset{\overline{S}}{\tint }u:=\underset{x\in \overline{S}}{\tsum }u(x).
\end{equation*}%
For any pair of functions $u,v:\overline{S}\rightarrow \mathbb{R}$ we have
by direct calculation (see \cite{Chung-Park, Kim})%
\begin{equation}
2\underset{\overline{S}}{\tint }(-\Delta _{p,\omega }u)v=\underset{\overline{%
S}}{\tint }\nabla _{p,\omega }u\circ \nabla _{\omega }v.  \label{2calki}
\end{equation}

Now we recall the fundamental tools from critical point theory. Let $%
(E,\left\Vert \cdot \right\Vert )$ be a real Banach space and let $%
J:E\rightarrow \mathbb{R}$. We say that the functional $J$ is coercive if $%
\lim_{\left\Vert u\right\Vert \rightarrow \infty }J(u)=+\infty $ and
anticoercive if $\lim_{\left\Vert u\right\Vert \rightarrow \infty
}J(u)=-\infty .$

\begin{theorem}
\cite{ma} \label{mbrw}(Weierstrass Theorem) Let $E$ be a reflexive Banach
space. If a functional $J\in C^{1}(E,\mathbb{R})$\ is weakly lower
semi-continuous and coercive then there exists $\overset{\_}{x}\in E$\ such
that $\underset{x\in E}{\inf }J(x)=J(\overset{\_}{x})$ and $\overset{\_}{x}$%
\ is also a critical point of $J$, i.e. $J^{^{\prime }}(\overset{\_}{x})=0.$%
Moreover, if $J$ is strictly convex, then a critical point is unique.
\end{theorem}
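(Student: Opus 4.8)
The plan is to apply the direct method of the calculus of variations. Set $m:=\inf_{x\in E}J(x)$; since $J$ is real-valued we have $m\leq J(0)<+\infty$. First I would take a minimizing sequence $(x_{n})\subset E$, that is, a sequence with $J(x_{n})\rightarrow m$. The role of coercivity is to bound this sequence: if $(x_{n})$ were unbounded, then along a subsequence $\left\Vert x_{n}\right\Vert \rightarrow \infty $, and coercivity would force $J(x_{n})\rightarrow +\infty $, contradicting $J(x_{n})\rightarrow m<+\infty $. Hence $(x_{n})$ is bounded.

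Next I would invoke reflexivity. In a reflexive Banach space every bounded sequence admits a weakly convergent subsequence (the weak sequential compactness of bounded sets, via the Banach--Alaoglu theorem), so there exist a subsequence $(x_{n_{k}})$ and a point $\overline{x}\in E$ with $x_{n_{k}}\rightharpoonup \overline{x}$. Weak lower semi-continuity of $J$ then yields
\[
J(\overline{x})\leq \liminf_{k\rightarrow \infty }J(x_{n_{k}})=m.
\]
Since $m$ is the infimum we also have $J(\overline{x})\geq m$, whence $J(\overline{x})=m$; in particular $m$ is finite and the infimum is attained at $\overline{x}$. To see that $\overline{x}$ is a critical point, I would use that $\overline{x}$ is a global minimizer together with $J\in C^{1}(E,\mathbb{R})$: for each $v\in E$ the scalar function $t\mapsto J(\overline{x}+tv)$ attains a minimum at $t=0$, so its derivative vanishes there, giving $\langle J^{\prime }(\overline{x}),v\rangle =0$; as $v$ is arbitrary, $J^{\prime }(\overline{x})=0$.

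For the uniqueness statement under strict convexity I would use the first-order characterization of convexity: for a $C^{1}$ convex functional one has $J(y)\geq J(x)+\langle J^{\prime }(x),y-x\rangle $ for all $x,y\in E$, with strict inequality whenever $y\neq x$ in the strictly convex case. Taking $x$ to be any critical point makes $\langle J^{\prime }(x),y-x\rangle =0$, so $J(y)>J(x)$ for every $y\neq x$; thus a critical point is the unique global minimizer and hence the only critical point.

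The main obstacle, conceptually, is the compactness step: the whole argument hinges on extracting a limit of the minimizing sequence while retaining control of $J$ along it. This is exactly where the three hypotheses interlock — coercivity supplies boundedness, reflexivity converts boundedness into weak subsequential convergence, and weak lower semi-continuity guarantees that no value of $J$ is lost in passing to the weak limit. The passage from minimizer to critical point and the uniqueness under strict convexity are then routine consequences of $J\in C^{1}(E,\mathbb{R})$.
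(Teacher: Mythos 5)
Your argument is the standard direct-method proof and it is correct: coercivity bounds a minimizing sequence, reflexivity yields a weakly convergent subsequence, weak lower semi-continuity passes the infimum to the limit, differentiability at an interior global minimizer gives $J^{\prime}(\overline{x})=0$, and the first-order strict-convexity inequality forces any critical point to be the unique strict global minimizer. The paper itself gives no proof of this statement --- it is quoted as a classical result from the cited reference --- so there is nothing in the text to compare against; your proposal supplies exactly the expected argument.
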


We say (\cite{mp}) that a continuously differentiable functional $J$\
defined on $E$ satisfies the Palais-Smale condition if every sequence $%
\{u_{n}\}$\ in $E$ such that 
\begin{equation*}
\{J(u_{n})\}\ \text{is bounded and }J^{^{\prime }}(u_{n})\longrightarrow 0%
\text{ in }E^{\ast }\text{ as }n\longrightarrow \infty
\end{equation*}%
has a convergent subsequence.

In this paper we apply the following version of the mountain pass lemma.

\begin{lemma}
\cite{mp}\label{lem2}(Mountain Pass Lemma) Let $E$ be a real Banach space
and let $J\in C^{1}(E,\mathbb{R})$ satisfy the Palais-Smale condition.
Assume that there exist $u_{0},u_{1}\in E$ and a bounded open neighborhood $%
\Omega $ of $u_{0}$ such that $u_{1}\notin \overline{\Omega }$ and 
\begin{equation*}
\max \{J(u_{0}),J(u_{1})\}<\inf_{u\in \partial \Omega }J(u).
\end{equation*}%
Let 
\begin{equation*}
\Gamma =\{\gamma \in C([0,1],E):\gamma (0)=u_{0},\text{ }\gamma (1)=u_{1}\}
\end{equation*}%
and 
\begin{equation*}
c=\inf_{\gamma \in \Gamma }\max_{t\in \lbrack 0,1]}J(\gamma (t)).
\end{equation*}%
Then $c$ is a critical value of $J$; that is, there exists $u^{\star }\in E$
such that $J^{\prime }(u^{\star })=0$ and $J(u^{\star })=c$, where $c>\max
\{J(u_{0}),J(u_{1})\}$.
\end{lemma}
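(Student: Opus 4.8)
The plan is to establish $c$ as a minimax critical value by the classical deformation argument, using the Palais--Smale condition to supply the compactness that replaces local compactness of $E$. Throughout write $J^{a}:=\{u\in E:J(u)\leq a\}$ for the sublevel sets. First I would check that $c$ is well defined and lies strictly above the endpoint values. The class $\Gamma $ is nonempty because the segment $\gamma _{0}(t)=(1-t)u_{0}+tu_{1}$ belongs to it, and since $J$ is continuous on the compact set $\gamma _{0}([0,1])$ this already gives $c<+\infty $. For the lower bound, every $\gamma \in \Gamma $ satisfies $\gamma (0)=u_{0}\in \Omega $ and $\gamma (1)=u_{1}\notin \overline{\Omega }$; by continuity of $\gamma $ and connectedness of $[0,1]$ there is $t^{\ast }\in (0,1)$ with $\gamma (t^{\ast })\in \partial \Omega $, whence $\max _{t}J(\gamma (t))\geq J(\gamma (t^{\ast }))\geq \inf _{u\in \partial \Omega }J(u)$. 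Taking the infimum over $\gamma $ yields $c\geq \inf _{u\in \partial \Omega }J(u)>\max \{J(u_{0}),J(u_{1})\}$, which is the asserted strict inequality.

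The heart of the argument is a deformation lemma, and this is the step I expect to be the main obstacle. Arguing by contradiction, suppose $c$ is not a critical value of $J$. I claim there exist $\bar{\epsilon}>0$ and $\delta >0$, with $\bar{\epsilon}$ small enough that $c-\bar{\epsilon}>\max \{J(u_{0}),J(u_{1})\}$, such that $\Vert J^{\prime }(u)\Vert _{E^{\ast }}\geq \delta $ whenever $|J(u)-c|\leq \bar{\epsilon}$. Indeed, if no such bound held, one could produce a sequence $u_{n}$ with $J(u_{n})\rightarrow c$ and $J^{\prime }(u_{n})\rightarrow 0$; by Palais--Smale it would have a subsequence converging to some $\bar{u}$, and continuity of $J$ and $J^{\prime }$ would force $J(\bar{u})=c$ and $J^{\prime }(\bar{u})=0$, making $c$ a critical value, a contradiction. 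Since $E$ is only a Banach space, $J^{\prime }(u)\in E^{\ast }$ cannot serve directly as a descent direction, so I would construct a locally Lipschitz pseudo-gradient vector field for $J$ on the region where $J^{\prime }\neq 0$, multiply it by a cut-off supported in the strip $\{|J-c|\leq \bar{\epsilon}\}$, and integrate the resulting ODE flow. Choosing the integration time appropriately produces a continuous map $\eta :E\rightarrow E$ and an $\epsilon \in (0,\bar{\epsilon})$ such that $\eta (J^{c+\epsilon })\subset J^{c-\epsilon }$ while $\eta (u)=u$ for every $u$ with $J(u)<c-\bar{\epsilon}$.

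Finally I would derive the contradiction. By definition of the infimum, choose $\gamma \in \Gamma $ with $\max _{t}J(\gamma (t))<c+\epsilon $, so that $\gamma ([0,1])\subset J^{c+\epsilon }$. Because $J(u_{0}),J(u_{1})<c-\bar{\epsilon}$, the deformation fixes both endpoints, so $\tilde{\gamma }:=\eta \circ \gamma $ again belongs to $\Gamma $. On the other hand $\tilde{\gamma }([0,1])=\eta (\gamma ([0,1]))\subset \eta (J^{c+\epsilon })\subset J^{c-\epsilon }$, so $\max _{t}J(\tilde{\gamma }(t))\leq c-\epsilon <c$, contradicting $c=\inf _{\gamma \in \Gamma }\max _{t}J(\gamma (t))$. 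Hence $c$ must be a critical value, and the strict inequality $c>\max \{J(u_{0}),J(u_{1})\}$ was already shown, completing the proof. I would remark that in the present finite-dimensional graph setting the construction simplifies considerably, since the Palais--Smale condition is nearly automatic and the ordinary gradient flow of $J$ may be used in place of a pseudo-gradient.
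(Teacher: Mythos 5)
The paper offers no proof of this lemma: it is imported verbatim from the cited reference \cite{mp}, so the benchmark is the classical textbook argument. Your proposal is exactly that standard proof --- the crossing argument showing every path in $\Gamma$ meets $\partial \Omega$ (hence $c\geq \inf_{\partial \Omega }J>\max \{J(u_{0}),J(u_{1})\}$), followed by the Palais--Smale-based lower bound on $\Vert J^{\prime }\Vert$ near level $c$, the pseudo-gradient deformation, and the contradiction with the definition of $c$ --- and it is correct in all essentials.
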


We also apply the weak form of \ Ekeland's variational principle, namely

\begin{theorem}
\cite{Elel}\label{Ekeland copy(1)} (Ekeland's Variational Principle - weak
form) Let $(X,d)$ be a complete metric space. Let $\Phi :X\mathbb{%
\rightarrow R\cup \{+\infty \}}$ be lower semicontinuous and bounded from
below. Then given $\varepsilon >0$ there exists $u_{\varepsilon }\in X$ such
that:\newline
\newline
(1) $\Phi (u_{\varepsilon })\leq \inf_{u\in X}\Phi (u)+\varepsilon ,$\newline
\newline
(2) $\Phi (u_{\varepsilon })<\Phi (u)+\varepsilon d(u,u_{\varepsilon })$ \
for all $u\in X$ with $u\neq u_{\varepsilon }.$
\end{theorem}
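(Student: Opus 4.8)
The plan is to encode condition (2) as a minimality statement for a suitable partial order on $X$ and then produce a minimal element lying close to the infimum. Define, for $u,v\in X$,
\[
u\preceq v \quad\Longleftrightarrow\quad \Phi(u)+\varepsilon\, d(u,v)\le \Phi(v).
\]
First I would check that $\preceq$ is indeed a partial order: reflexivity is immediate, antisymmetry follows by adding the two defining inequalities and using $d\ge 0$, and transitivity follows from the triangle inequality for $d$. The key observation is that a point $w$ satisfies (2) precisely when it is $\preceq$-minimal, i.e. when no $u\neq w$ obeys $u\preceq w$; indeed $u\preceq w$ with $u\neq w$ is exactly the negation of (2) at $u$. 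Note also that $u\preceq v$ forces $\Phi(u)\le\Phi(v)$, so $\Phi$ is monotone along $\preceq$-chains.

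To obtain (1) simultaneously, I would start from a near-minimizer $u_0$ with $\Phi(u_0)\le \inf_{u\in X}\Phi(u)+\varepsilon$, which exists because $\Phi$ is bounded below. I would then build a sequence $\{u_n\}$ with $u_{n+1}\preceq u_n$ by setting $S_n:=\{u\in X: u\preceq u_n\}$ and choosing $u_{n+1}\in S_n$ with $\Phi(u_{n+1})\le \inf_{u\in S_n}\Phi(u)+2^{-n}$. Each $S_n$ is nonempty (it contains $u_n$) and closed, the latter because $u\mapsto \Phi(u)+\varepsilon\, d(u,u_n)$ is lower semicontinuous and $S_n$ is one of its sublevel sets; this makes the construction legitimate.

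Next I would show $\{u_n\}$ is Cauchy. From $u_{n+1}\preceq u_n$ one gets $\varepsilon\, d(u_{n+1},u_n)\le \Phi(u_n)-\Phi(u_{n+1})$; since $\{\Phi(u_n)\}$ is nonincreasing and bounded below it converges, so the telescoping sum $\sum_n\bigl(\Phi(u_n)-\Phi(u_{n+1})\bigr)$ is finite and hence $\sum_n d(u_{n+1},u_n)<\infty$. Completeness of $(X,d)$ then yields $u_n\to u_\varepsilon$ for some $u_\varepsilon\in X$. Because $u_m\preceq u_n$ for all $m>n$ (transitivity) and $S_n$ is closed, passing to the limit gives $u_\varepsilon\preceq u_n$ for every $n$; in particular $u_\varepsilon\preceq u_0$, so $\Phi(u_\varepsilon)\le\Phi(u_0)\le \inf_{u\in X}\Phi(u)+\varepsilon$, which is (1).

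Finally, for (2) I would prove that $u_\varepsilon$ is $\preceq$-minimal. Suppose $v\preceq u_\varepsilon$. Combining this with $u_\varepsilon\preceq u_n$ via transitivity gives $v\preceq u_n$, i.e. $v\in S_n$, for every $n$, so $\Phi(u_{n+1})\le \inf_{u\in S_n}\Phi(u)+2^{-n}\le \Phi(v)+2^{-n}$. Likewise $v\preceq u_{n+1}$ (transitivity through $u_\varepsilon\preceq u_{n+1}$) yields $\varepsilon\, d(v,u_{n+1})\le \Phi(u_{n+1})-\Phi(v)\le 2^{-n}$; letting $n\to\infty$ forces $d(v,u_\varepsilon)=0$, hence $v=u_\varepsilon$. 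Thus $u_\varepsilon$ has no strict $\preceq$-predecessor and (2) holds. I expect the main obstacle to be the limiting step: one must simultaneously secure that the $S_n$ are closed, so the $\preceq$-relations survive passage to the limit, and that the quantitative choice $\Phi(u_{n+1})\le\inf_{S_n}\Phi+2^{-n}$ is sharp enough to collapse any candidate predecessor onto $u_\varepsilon$. Making these two ingredients cooperate is the crux of the argument.
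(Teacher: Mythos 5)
This theorem is quoted in the paper as a known classical result with a citation to De Figueiredo's lecture notes; the paper contains no proof of it, so there is nothing to compare against line by line. Your argument is the standard and correct proof via the Bishop--Phelps-type partial order $u\preceq v\iff\Phi(u)+\varepsilon\,d(u,v)\le\Phi(v)$: the order properties, the closedness of the sets $S_n$, the Cauchy estimate from the telescoping sum, and the collapse of any predecessor of $u_{\varepsilon}$ are all handled correctly. The only (standard, implicit) caveat is that $\Phi$ must be proper, i.e.\ $\Phi\not\equiv+\infty$, for the starting point $u_{0}$ with $\Phi(u_{0})\le\inf_{X}\Phi+\varepsilon$ to exist.
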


Finally, let us recall Karush-Kuhn-Tucker conditions

\begin{theorem}
\cite{Gir}\label{KKT-THEO}\textit{(Karush-Kuhn-Tucker Theorem)} Let $E$%
\textit{\ be a Banach space and }$J_{0}$,...,$J_{n}$ be functionals on $E.$%
\textit{\ Let }$x_{0}$ be a solution of the problem%
\begin{equation*}
\left\{ 
\begin{array}{c}
\underset{x\in E}{\min }J_{0}(x);\text{ \ \ \ \ \ \ \ \ \ \ \ \ \ \ \ \ }%
\bigskip \\ 
J_{i}(x)\leq 0,\text{ }i=1,...,n,%
\end{array}%
\right.
\end{equation*}%
and assume that the functionals $J_{0},...,J_{n}$ are Fr\'{e}chet
differentiable at $x_{0}.$ Then there exist nonnegative real numbers $%
\lambda _{0},...,\lambda _{n}$, not all zero, such that%
\begin{equation*}
\lambda _{i}J_{i}(x_{0})=0,\text{ }i=1,...,n,
\end{equation*}%
and%
\begin{equation*}
\lambda _{0}J_{0}^{\prime }(x_{0})+\lambda _{1}J_{1}^{\prime
}(x_{0})+...+\lambda _{n}J_{n}^{\prime }(x_{0})=0\text{.}
\end{equation*}%
If the set $\left\{ x:J_{i}(x)\leq 0,\text{ }i=1,...,n\right\} $ has a
non-empty interior (Slater's condition) then we can fix $\lambda _{0}=1$.
\end{theorem}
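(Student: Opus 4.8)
The plan is to prove the statement by reducing the infinite-dimensional optimality condition to a finite-dimensional separation argument applied to the gradients $J_0^{\prime }(x_0),\ldots ,J_n^{\prime }(x_0)\in E^{\ast }$. First I would split the constraints according to whether they are active at the minimizer. Put $I=\{i\in \{1,\ldots ,n\}:J_i(x_0)=0\}$ for the active indices and, for every inactive index $i\notin I$ (so that $J_i(x_0)<0$), set $\lambda _i=0$. This choice makes the complementary slackness relations $\lambda _iJ_i(x_0)=0$ hold automatically for the inactive constraints, and it reduces the problem to producing nonnegative multipliers for $J_0^{\prime }(x_0)$ together with the active gradients $\{J_i^{\prime }(x_0):i\in I\}$.

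The heart of the argument is the claim that the linearized system
\[
J_0^{\prime }(x_0)h<0,\qquad J_i^{\prime }(x_0)h<0\quad (i\in I)
\]
admits no solution $h\in E$. I would prove this by contradiction: if such an $h$ existed, then for small $t>0$ the Fr\'{e}chet expansion $J_j(x_0+th)=J_j(x_0)+tJ_j^{\prime }(x_0)h+o(t)$ would give $J_0(x_0+th)<J_0(x_0)$ and $J_i(x_0+th)<J_i(x_0)=0$ for every active $i$, while the inactive constraints stay strictly negative for $t$ small by continuity. Thus $x_0+th$ would be feasible with strictly smaller objective value, contradicting the minimality of $x_0$. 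This is the step where Fr\'{e}chet differentiability and the careful bookkeeping of active versus inactive constraints are genuinely used, and it is where I expect the main difficulty to lie.

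Once the inconsistency of the linearized system is established, I would pass to finite dimensions. Consider the linear map $\Phi :E\rightarrow \mathbb{R}^{|I|+1}$ given by $\Phi (h)=(J_0^{\prime }(x_0)h,(J_i^{\prime }(x_0)h)_{i\in I})$. The previous step says precisely that the range of $\Phi $, a linear subspace of $\mathbb{R}^{|I|+1}$, does not meet the open negative orthant. A finite-dimensional separation theorem (equivalently, Gordan's alternative) then yields a nonzero vector $(\lambda _0,(\lambda _i)_{i\in I})$ with nonnegative entries such that $\lambda _0J_0^{\prime }(x_0)h+\sum_{i\in I}\lambda _iJ_i^{\prime }(x_0)h\geq 0$ for all $h$; since the range is a subspace, hence symmetric under $h\mapsto -h$, the inequality forces equality and gives $\lambda _0J_0^{\prime }(x_0)+\sum_{i\in I}\lambda _iJ_i^{\prime }(x_0)=0$. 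Together with $\lambda _i=0$ for $i\notin I$ this is exactly the asserted stationarity relation with multipliers not all zero.

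It remains to justify the normalization $\lambda _0=1$ under Slater's condition. I would argue by contradiction: if $\lambda _0=0$, then $\sum_{i\in I}\lambda _iJ_i^{\prime }(x_0)=0$ with some $\lambda _i>0$. Choosing a strictly feasible point $\bar{x}$ with $J_i(\bar{x})<0$ supplied by Slater's condition and testing the relation against the direction $h=\bar{x}-x_0$, the convexity (hence the subgradient inequality $J_i(\bar{x})\geq J_i(x_0)+J_i^{\prime }(x_0)(\bar{x}-x_0)$) of the active constraints yields $0=\sum_{i\in I}\lambda _iJ_i^{\prime }(x_0)(\bar{x}-x_0)\leq \sum_{i\in I}\lambda _iJ_i(\bar{x})<0$, a contradiction. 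Hence $\lambda _0\neq 0$, and dividing through by $\lambda _0$ lets us fix $\lambda _0=1$ while keeping the remaining multipliers nonnegative, which completes the proof. The reliance on convexity of the constraints in this last step is the subtle point; without a constraint qualification of this kind the multiplier $\lambda _0$ cannot in general be normalized away.
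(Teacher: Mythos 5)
The paper does not prove this statement at all: it is quoted from Girsanov's book \cite{Gir} and used as a black box, so there is no in-house argument to compare yours against. Judged on its own merits, the core of your proposal (the Fritz John part) is the standard and correct route: splitting into active and inactive constraints, using the Fr\'{e}chet expansion to show that the linearized system $J_{0}^{\prime }(x_{0})h<0$, $J_{i}^{\prime }(x_{0})h<0$ for $i\in I$ is inconsistent (with the inactive constraints handled by continuity), and then separating the range of $\Phi $ from the open negative orthant of $\mathbb{R}^{|I|+1}$ to obtain nonnegative multipliers, not all zero, that annihilate that subspace. This correctly delivers the existence of $\lambda _{0},\ldots ,\lambda _{n}\geq 0$, not all zero, together with complementary slackness and the stationarity identity.

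The gap is exactly where you suspect it: the normalization $\lambda _{0}=1$. Your argument invokes the subgradient inequality $J_{i}(\bar{x})\geq J_{i}(x_{0})+J_{i}^{\prime }(x_{0})(\bar{x}-x_{0})$, i.e.\ convexity of the constraint functionals, which appears nowhere in the hypotheses of the statement you were asked to prove; moreover, even granting convexity, ``the feasible set has nonempty interior'' is strictly weaker than the existence of a point $\bar{x}$ with $J_{i}(\bar{x})<0$ for all $i$ (take $J_{1}\equiv 0$: the feasible set is all of $E$, yet there is no strictly feasible point and $\lambda _{0}$ cannot in general be normalized). Without convexity the normalization claim is simply false: minimizing $J_{0}(x,y)=x$ over $\left\{ (x,y)\in \mathbb{R}^{2}:y-x^{3}\leq 0,\ -y\leq 0\right\} $ has its solution at the origin, where stationarity forces $\lambda _{0}=0$ even though the feasible set has nonempty interior. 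So your proof cannot be completed for the statement as literally written; the missing hypotheses (convexity of the $J_{i}$ and strict feasibility) belong to Girsanov's original formulation and were dropped in the paper's quotation. This is worth flagging because the paper later applies the theorem to the non-convex constraint $\zeta ^{2}-\left\Vert u\right\Vert ^{2}\leq 0$, where the normalization step is precisely the delicate point.
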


In this paper we also use ideas in \cite{BerJebMawhin}, where the Authors
proved that if $X\subset E$ is an open set, the functional $J$ satisfies the
Palais-Smale condition and 
\begin{equation}
\inf_{x\in \overline{X}}J\left( x\right) <\inf_{x\in \partial X}J\left(
x\right)  \label{J_domX<J_brzegX}
\end{equation}%
then there exists some $x_{0}\in E$ which is a critical point to $J$ such
that%
\begin{equation*}
\inf_{x\in \overline{X}}J\left( x\right) =J\left( x_{0}\right) .
\end{equation*}

The multiplicity result in \cite{BerJebMawhin} is also obtained with the aid
of the mountain pass lemma in the following context. If $X$ is an open ball
centered at $0$ with radius $r>0,$ the functional $J$ satisfies the
Palais-Smale condition and $J\left( 0\right) =0$ then there exists an
element $e\in E\backslash X$ such that $J\left( e\right) \leq 0$. If
additionally%
\begin{equation*}
-\infty <\inf_{x\in \overline{X}}J\left( x\right) <0<\inf_{x\in \partial
X}J\left( x\right) ,
\end{equation*}%
then $J$ has two critical points.

By using the above mentioned methods the Author in \cite{serban2} obtains
the existence of at least two non-zero solutions for some periodic and
Neumann problems with the discrete $p(\cdot )-$Laplacian.

\section{Auxiliary inequalities\label{SecIneq}}

Recall that in this paper we examine the existence of solutions for problem
with Dirichlet boundary conditions. Therefore let us define the space $A$ in
which the problem will be considered as follows%
\begin{equation*}
A:=\{u:\overline{S}\rightarrow \mathbb{R}:u(x)=0\text{ \ for all \ }x\in
\partial S\}.
\end{equation*}%
Then $A$ is a finite dimensional Euclidean space provided with the norm
given by 
\begin{equation}
\left\Vert u\right\Vert :=\left( \underset{\overline{S}}{\tint }\left\vert
u\right\vert ^{2}\right) ^{\frac{1}{2}}  \label{norm}
\end{equation}%
and with naturally associated scalar product. The dimension of $A$ is $%
\left\vert S\right\vert $.

Let us also introduce a notation used throughout the paper, namely%
\begin{equation*}
\begin{array}{l}
p^{-}:=\underset{x\in S}{\min }\text{ }p(x);\text{ \ \ }p^{+}:=\underset{%
x\in S}{\max }\text{ }p(x);\bigskip \\ 
\overline{p}^{-}:=\underset{x\in \overline{S}}{\min }\text{ }p(x);\text{ \ \ 
}\overline{p}^{+}:=\underset{x\in \overline{S}}{\max }\text{ }p(x);\bigskip
\\ 
\text{ \ \ \ \ \ \ \ }\overline{\omega }^{+}:=\underset{(x,y)\in \overline{S}%
\times \overline{S}}{\max }\omega (x,y).%
\end{array}%
\end{equation*}

On the space $A$ the following inequalities hold.

\begin{lemma}
\label{inequalities}\ \newline
\textbf{(a.1)} For every $u\in A$ and for every $m\geq 1$ we have%
\begin{equation*}
\dsum\limits_{x\in S}|u(x)|^{m}\leq \left\vert S\right\vert \left\Vert
u\right\Vert ^{m}.\text{ \ \ \ \ \ \ \ \ \ \ \ \ \ \ \ \ \ \ \ \ \ \ \ \ \ \
\ \ \ \ \ \ \ \ \ \ \ \ \ \ \ \ \ \ \ \ \ \ }
\end{equation*}%
\textbf{(a.2)} For every $u\in A$ and for every $m\geq 2$ we have%
\begin{equation*}
\dsum\limits_{x,y\in \overline{S}}\left\vert u(y)-u(x)\right\vert ^{m}\leq
2^{m}\left\vert \overline{S}\right\vert \left\vert S\right\vert \left\Vert
u\right\Vert ^{m}.\text{ \ \ \ \ \ \ \ \ \ \ \ \ \ \ \ \ \ \ \ \ \ \ \ \ \ \
\ \ \ \ \ \ }
\end{equation*}%
\textbf{(a.3)} For every $u\in A$ and for every $m\geq 2$ we have%
\begin{equation*}
\dsum\limits_{x\in S}\left\vert u(x)\right\vert ^{m}\geq 2^{-\frac{m}{2}%
}\left\vert \partial S\right\vert ^{\frac{m}{2}}\left\vert \overline{S}%
\right\vert ^{1-m}\left\Vert u\right\Vert ^{m}.\text{ \ \ \ \ \ \ \ \ \ \ \
\ \ \ \ \ \ \ \ \ \ \ \ \ \ \ \ \ \ }
\end{equation*}%
\textbf{(a.4)} For every $u\in A$ and for every $p^{-}\geq 2$ we have%
\begin{equation*}
\dsum\limits_{x\in S}\left\vert u(x)\right\vert ^{p(x)}\geq 2^{-\frac{p^{-}}{%
2}}\left\vert \partial S\right\vert ^{\frac{p^{-}}{2}}\left\vert \overline{S}%
\right\vert ^{1-p^{-}}\left\Vert u\right\Vert ^{p^{-}}-\left\vert
S\right\vert .\text{ \ \ \ \ \ \ \ \ \ \ \ \ \ \ \ \ }
\end{equation*}%
\textbf{(a.5)} For every $u\in A$ and for every $\overline{p}^{+}\geq 2$ we
have%
\begin{equation*}
\dsum\limits_{x,y\in \overline{S}}\left\vert u(y)-u(x)\right\vert
^{p(x)}\omega (x,y)\leq \overline{\omega }^{+}2^{\overline{p}^{+}}\left\vert 
\overline{S}\right\vert \left\vert S\right\vert \left\Vert u\right\Vert ^{%
\overline{p}^{+}}+\overline{\omega }^{+}\left\vert \overline{S}\right\vert
^{2}.\text{ \ \ }
\end{equation*}%
\textbf{(a.6)} For every $u\in A$ and for every $p^{+}\geq 2$ we have%
\begin{equation*}
\dsum\limits_{x\in S}\left\vert u(x)\right\vert ^{p(x)}\leq \left\vert
S\right\vert \left\Vert u\right\Vert ^{p^{+}}+\left\vert S\right\vert .\text{
\ \ \ \ \ \ \ \ \ \ \ \ \ \ \ \ \ \ \ \ \ \ \ \ \ \ \ \ \ \ \ \ \ \ \ \ \ \ }
\end{equation*}%
\textbf{(a.7)} For every $u\in A$ we have%
\begin{equation*}
\underset{x\in S}{\max }\left\vert u(x)\right\vert \leq \left\vert \overline{%
S}\right\vert ^{\frac{1}{2}}\left\Vert u\right\Vert .\text{ \ \ \ \ \ \ \ \
\ \ \ \ \ \ \ \ \ \ \ \ \ \ \ \ \ \ \ \ \ \ \ \ \ \ \ \ \ \ \ \ \ \ \ \ \ \
\ \ }
\end{equation*}
\end{lemma}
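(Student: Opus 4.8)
The plan is to reduce everything to the single structural fact that, since every $u\in A$ vanishes on $\partial S$, one has $\|u\|^{2}=\int_{\overline{S}}|u|^{2}=\sum_{x\in S}|u(x)|^{2}$; in particular $|u(x)|\le\|u\|$ for each $x\in S$. This settles the easy parts immediately: \textbf{(a.7)} is just $\max_{x}|u(x)|\le\|u\|\le|\overline{S}|^{1/2}\|u\|$, and \textbf{(a.1)} follows by raising $|u(x)|\le\|u\|$ to the power $m\ge1$ and summing over the $|S|$ interior vertices. For \textbf{(a.2)} I would first use the convexity inequality $|a-b|^{m}\le 2^{m-1}(|a|^{m}+|b|^{m})$, valid for $m\ge1$, together with the symmetry of the double sum in $x,y$, to obtain $\sum_{x,y\in\overline{S}}|u(y)-u(x)|^{m}\le 2^{m}|\overline{S}|\sum_{x\in\overline{S}}|u(x)|^{m}$; since $u\equiv 0$ on $\partial S$ the inner sum runs only over $S$, and (a.1) then closes the estimate.

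The genuine step is \textbf{(a.3)}, the only lower bound that does not reduce to a pointwise comparison. Here I would set $a_{x}:=|u(x)|^{2}\ge 0$ and apply Jensen's inequality (equivalently the power-mean inequality) to the convex function $t\mapsto t^{m/2}$ --- convex precisely because $m\ge2$ --- over the $|S|$ interior vertices, giving $\frac{1}{|S|}\sum_{x\in S}a_{x}^{m/2}\ge\bigl(\frac{1}{|S|}\sum_{x\in S}a_{x}\bigr)^{m/2}$. Rearranging yields $\sum_{x\in S}|u(x)|^{m}\ge|S|^{1-m/2}\|u\|^{m}$. Since $|S|,|\partial S|<|\overline{S}|$ one checks (taking logarithms, the coefficient of $m$ being negative) that the constant $2^{-m/2}|\partial S|^{m/2}|\overline{S}|^{1-m}$ claimed in (a.3) never exceeds $|S|^{1-m/2}$ for $m\ge2$, so the stated inequality follows a fortiori. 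This convexity argument is where the hypothesis $m\ge2$ is really used, and is the one place where a routine case split is not enough; I expect this to be the main obstacle, with the remainder of the lemma being bookkeeping built on top of it.

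Finally, \textbf{(a.4)}, \textbf{(a.5)} and \textbf{(a.6)} all follow from a single device for handling the variable exponent: comparing $p(x)$ with the constant extreme exponents by splitting according to whether the base lies in $[1,\infty)$ or in $[0,1)$. This produces the elementary pointwise bounds $|u(x)|^{p(x)}\ge|u(x)|^{p^{-}}-1$ and $|u(x)|^{p(x)}\le|u(x)|^{p^{+}}+1$, and likewise $|u(y)-u(x)|^{p(x)}\le|u(y)-u(x)|^{\overline{p}^{+}}+1$. Summing the first over $x\in S$ and invoking (a.3) with $m=p^{-}$ gives (a.4) with the correction term $-|S|$; summing the second and using (a.1) with $m=p^{+}$ gives (a.6) with $+|S|$; and summing the third against $\omega(x,y)\le\overline{\omega}^{+}$ and using (a.2) with $m=\overline{p}^{+}$ gives (a.5), the additive term $\overline{\omega}^{+}|\overline{S}|^{2}$ arising from the $|\overline{S}|^{2}$ pairs each contributing the bound on the ``$+1$''. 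The only mildly delicate point is keeping the multiplicative constants and the additive correction terms correctly aligned with the claimed right-hand sides.
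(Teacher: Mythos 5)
Your proof is correct, and for parts (a.1), (a.2), (a.4)--(a.7) it follows essentially the paper's argument: the paper handles the variable exponent in (a.4)--(a.6) by splitting the index set according to whether the base exceeds $1$, which is precisely your pointwise bounds $t^{p(x)}\ge t^{p^-}-1$ and $t^{p(x)}\le t^{p^+}+1$ in summed form, and (a.2) is obtained from the same $2^{m-1}$-convexity inequality. The genuine divergence is in (a.3), where your route is different and in fact sharper. The paper proves (a.3) by a detour through the difference double sum: it first shows $\sum_{x,y\in\overline S}|u(y)-u(x)|^2\ge 2|\partial S|\,\Vert u\Vert^2$ via Cauchy--Schwarz, then interpolates with the discrete H\"older inequality to get $\sum_{x,y\in\overline S}|u(y)-u(x)|^m\ge 2^{m/2}|\partial S|^{m/2}|\overline S|^{2-m}\Vert u\Vert^m$, and finally divides by the upper bound underlying (a.2); the constant $2^{-m/2}|\partial S|^{m/2}|\overline S|^{1-m}$ is an artifact of that chain. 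Your direct application of Jensen to $t\mapsto t^{m/2}$ yields the cleaner and stronger bound $\sum_{x\in S}|u(x)|^m\ge |S|^{1-m/2}\Vert u\Vert^m$, and your comparison of constants does go through: after taking logarithms the required inequality is affine in $m$ with negative slope, because $|\partial S|\,|S|\le |\overline S|^2/4<2|\overline S|^2$, so it suffices to check $m=2$, where it reduces to $|\partial S|\le 2|\overline S|$. (It would be worth writing out this one-line verification, since "a fortiori" is doing real work there.) Thus your version of (a.3) implies the paper's; the only thing the paper's longer route buys is the intermediate lower bound on $\sum_{x,y\in\overline S}|u(y)-u(x)|^m$ itself, which is not used elsewhere in the paper.
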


\begin{proof}
We will show that \textbf{(a.1) }holds\textbf{. }For all $x\in S$ we have%
\begin{equation*}
|u(x)|^{2}\leq \dsum\limits_{s\in S}|u(s)|^{2}=\dsum\limits_{s\in \overline{S%
}}|u(s)|^{2}.
\end{equation*}%
Thus for every $m\geq 1$ we get%
\begin{equation*}
|u(x)|^{m}\leq \left( \dsum\limits_{s\in \overline{S}}|u(s)|^{2}\right) ^{%
\frac{m}{2}},
\end{equation*}%
which leads to%
\begin{equation*}
\dsum\limits_{x\in S}\ |u(x)|^{m}\leq \left\vert S\right\vert \left\Vert
u\right\Vert ^{m}.
\end{equation*}

To see \textbf{(a.2)} first note that for every $u\in A$ and for every $m>0$
we have 
\begin{equation*}
\dsum\limits_{x,y\in \overline{S}}\left\vert u(x)\right\vert
^{m}=\dsum\limits_{y\in \overline{S}}\left( \dsum\limits_{x\in \overline{S}%
}\left\vert u(x)\right\vert ^{m}\right) =\left\vert \overline{S}\right\vert
\dsum\limits_{x\in S}\left\vert u(x)\right\vert ^{m}.\bigskip
\end{equation*}%
Recall also that for every $m\geq 2$ the following inequality hold (see \cite%
{Bre})%
\begin{equation*}
\left\vert a+b\right\vert ^{m}+\left\vert a-b\right\vert ^{m}\leq
2^{m-1}\left( \left\vert a\right\vert ^{m}+\left\vert b\right\vert
^{m}\right) \text{ for all }a,b\in \mathbb{R}\text{.}
\end{equation*}%
Thus for every $u\in A$ and for every $m\geq 2$ we have 
\begin{equation}
\dsum\limits_{x,y\in \overline{S}}|u(y)-u(x)|^{m}\leq 2^{m}\left\vert 
\overline{S}\right\vert \dsum\limits_{x\in S}\left\vert u(x)\right\vert ^{m}.
\label{ineq_a4}
\end{equation}%
In a consequence, by \textbf{(a.1)} we get \textbf{(a.2)}.

We will show that \textbf{(a.3) }holds\textbf{. }Using twice the discrete H%
\"{o}lder inequality we have%
\begin{equation*}
\begin{array}{l}
\dsum\limits_{x,y\in \overline{S}}\left\vert u(y)-u(x)\right\vert
^{2}=\dsum\limits_{x,y\in \overline{S}}\left\vert u(y)\right\vert
^{2}+\dsum\limits_{x,y\in \overline{S}}\left\vert u(x)\right\vert
^{2}-\bigskip \\ 
2\dsum\limits_{x,y\in \overline{S}}\left( u(y)u(x)\right) =2\left\vert 
\overline{S}\right\vert \left\Vert u\right\Vert ^{2}-2\dsum\limits_{x\in
S}\left( \left( \dsum\limits_{y\in S}u(y)\right) u(x)\right) \geq \bigskip
\\ 
2\left\vert \overline{S}\right\vert \left\Vert u\right\Vert ^{2}-2\left(
\dsum\limits_{x\in S}\left( \dsum\limits_{y\in S}u(y)\right) ^{2}\right) ^{%
\frac{1}{2}}\left( \dsum\limits_{x\in S}\left\vert u(x)\right\vert
^{2}\right) ^{\frac{1}{2}}=\bigskip \bigskip \\ 
2\left\vert \overline{S}\right\vert \left\Vert u\right\Vert ^{2}-2\left\vert
S\right\vert ^{\frac{1}{2}}\dsum\limits_{y\in S}u(y)\left\Vert u\right\Vert
\geq 2\left\Vert u\right\Vert ^{2}\left\vert \partial S\right\vert .%
\end{array}%
\end{equation*}%
On the other hand for every $m\geq 2$ the discrete H\"{o}lder inequality
implies%
\begin{equation*}
\begin{array}{l}
\dsum\limits_{x,y\in \overline{S}}\left\vert u(y)-u(x)\right\vert ^{2}\leq
\left( \dsum\limits_{x,y\in \overline{S}}1^{\frac{m}{m-2}}\right) ^{\frac{m-2%
}{m}}\left( \dsum\limits_{x,y\in \overline{S}}\left( \left\vert
u(y)-u(x)\right\vert ^{2}\right) ^{\frac{m}{2}}\right) ^{\frac{2}{m}%
}=\bigskip \bigskip \\ 
\left\vert \overline{S}\right\vert ^{\frac{2\left( m-2\right) }{m}}\left(
\dsum\limits_{x,y\in \overline{S}}|u(y)-u(x)|^{m}\right) ^{\frac{2}{m}%
}.\bigskip%
\end{array}%
\end{equation*}%
The above inequalities lead to%
\begin{equation*}
2\left\Vert u\right\Vert ^{2}\left\vert \partial S\right\vert \leq
\left\vert \overline{S}\right\vert ^{\frac{2\left( m-2\right) }{m}}\left(
\dsum\limits_{x,y\in \overline{S}}|u(y)-u(x)|^{m}\right) ^{\frac{2}{m}}.
\end{equation*}%
Thus for every $u\in A$ and for every $m\geq 2$ we have%
\begin{equation}
\dsum\limits_{x,y\in \overline{S}}|u(y)-u(x)|^{m}\geq 2^{\frac{m}{2}%
}\left\vert \partial S\right\vert ^{\frac{m}{2}}\left\vert \overline{S}%
\right\vert ^{2-m}\left\Vert u\right\Vert ^{m}.  \label{ineq_a3}
\end{equation}%
Combining (\ref{ineq_a4}) and \textbf{(}\ref{ineq_a3}) we get \textbf{(a.3)}.

Relation \textbf{(a.4) }is obtained by \textbf{(a.3)} as follows%
\begin{equation*}
\begin{array}{l}
\dsum\limits_{x\in S}\left\vert u(x)\right\vert ^{p(x)}\geq
\dsum\limits_{\left\{ x\in S:\left\vert u(x)\right\vert >1\right\}
}\left\vert u(x)\right\vert ^{p^{-}}+\dsum\limits_{\left\{ x\in S:\left\vert
u(x)\right\vert \leq 1\right\} }\left\vert u(x)\right\vert ^{p^{+}}=\bigskip
\bigskip \\ 
\dsum\limits_{x\in S}\left\vert u(x)\right\vert
^{p^{-}}-\dsum\limits_{\left\{ x\in S:\left\vert u(x)\right\vert \leq
1\right\} }\left( \left\vert u(x)\right\vert ^{p^{-}}-\left\vert
u(x)\right\vert ^{p^{+}}\right) \geq \bigskip \bigskip \\ 
2^{-\frac{p^{-}}{2}}\left\vert \partial S\right\vert ^{\frac{p^{-}}{2}%
}\left\vert \overline{S}\right\vert ^{1-p^{-}}\left\Vert u\right\Vert
^{p^{-}}-\left\vert S\right\vert .%
\end{array}%
\end{equation*}

Relation \textbf{(a.5) }is obtained by \textbf{(a.2)} as follows 
\begin{equation*}
\begin{array}{l}
\dsum\limits_{x,y\in \overline{S}}|u(y)-u(x)|^{p(x)}\omega (x,y)\leq 
\overline{\omega }^{+}\dsum\limits_{x,y\in \overline{S}}|u(y)-u(x)|^{p(x)}%
\leq \bigskip \\ 
\overline{\omega }^{+}\dsum\limits_{\left\{ x,y\in \overline{S}:\left\vert
u(y)-u(x)\right\vert >1\right\} }\left\vert u(y)-u(x)\right\vert ^{\overline{%
p}^{+}}+\bigskip \\ 
\overline{\omega }^{+}\dsum\limits_{\left\{ x,y\in \overline{S}:\left\vert
u(y)-u(x)\right\vert \leq 1\right\} }\left\vert u(y)-u(x)\right\vert ^{%
\overline{p}^{-}}\leq \bigskip \bigskip \\ 
\overline{\omega }^{+}\dsum\limits_{x,y\in \overline{S}}|u(y)-u(x)|^{%
\overline{p}^{+}}+\bigskip \\ 
\overline{\omega }^{+}\dsum\limits_{\left\{ x,y\in \overline{S}:\left\vert
u(y)-u(x)\right\vert \leq 1\right\} }\left( \left\vert u(y)-u(x)\right\vert
^{\overline{p}^{-}}-\left\vert u(y)-u(x)\right\vert ^{\overline{p}%
^{+}}\right) \leq \bigskip \bigskip \\ 
\overline{\omega }^{+}2^{\overline{p}^{+}}\left\vert \overline{S}\right\vert
\left\vert S\right\vert \left\Vert u\right\Vert ^{\overline{p}^{+}}+%
\overline{\omega }^{+}\left\vert \overline{S}\right\vert ^{2}.%
\end{array}%
\end{equation*}

The inequality \textbf{(a.6) }we obtain\textbf{\ }in the same manner as 
\textbf{(a.5),} using \textbf{(a.1)} instead of \textbf{(a.2).}

And finally, the discrete H\"{o}lder inequality implies \textbf{(a.7). }%
Indeed, for every $x\in S$ we have 
\begin{equation*}
\left\vert u(x)\right\vert \leq \dsum\limits_{s\in S}\left\vert
u(s)\right\vert =\dsum\limits_{s\in \overline{S}}\left\vert u(s)\right\vert
\leq \left\vert \overline{S}\right\vert ^{\frac{1}{2}}\left\Vert
u\right\Vert .
\end{equation*}%
Therefore 
\begin{equation*}
\underset{x\in S}{\max }\left\vert u(x)\right\vert \leq \left\vert \overline{%
S}\right\vert ^{\frac{1}{2}}\left\Vert u\right\Vert .
\end{equation*}%
The proof of Lemma \ref{inequalities} is complete.
\end{proof}

\section{Variational framework}

In order to study the problem considered we will start with putting in the
nonlinear term $f$\ the non-negative part of $u$ instead of $u$. Then we
obtain the following boundary value problem%
\begin{equation}
\left\{ 
\begin{array}{l}
-\Delta _{p(x),\omega }u(x)+q(x)\left\vert u(x)\right\vert
^{p(x)-2}u(x)=\lambda f(x,u_{+}(x)),\text{ \ }x\in S,\bigskip \\ 
u(x)=0,\text{ \ \ \ \ \ \ \ \ \ \ \ \ \ \ \ \ \ \ \ \ \ \ \ \ \ \ \ \ \ \ \
\ \ \ \ \ \ \ \ \ \ \ \ \ \ \ \ \ \ \ \ \ \ \ \ \ \ }x\in \partial S.%
\end{array}%
\right.  \label{uklad2+}
\end{equation}

Let us define the functional $J:A\rightarrow \mathbb{R}$ by the following
formula%
\begin{equation}
J(u)=\frac{1}{2}\underset{\overline{S}}{\int }\frac{1}{p}\nabla _{p,\omega
}u\circ \nabla _{\omega }u+\underset{S}{\int }\frac{1}{p}q\left\vert
u\right\vert ^{p}-\lambda \underset{S}{\int }F_{u_{+}},  \label{action}
\end{equation}%
where $F_{u_{+}}:S\rightarrow \mathbb{R}$ is defined by 
\begin{equation*}
F_{u_{+}}(x)=F(x,u_{+}(x)):=\overset{u_{+}(x)}{\underset{0}{\int }}f(x,s)ds%
\text{.}
\end{equation*}%
The functional $J$ can be rewritten as follows%
\begin{equation*}
\begin{array}{l}
J(u)=\frac{1}{2}\dsum\limits_{x\in \overline{S}}\left( \frac{1}{p(x)}\text{ }%
\dsum\limits_{y\in \overline{S}}\left\vert u(y)-u(x)\right\vert
^{p(x)}\omega (x,y)\right) \bigskip \\ 
+\dsum\limits_{x\in S}\frac{1}{p(x)}q(x)\left\vert u(x)\right\vert
^{p(x)}-\lambda \dsum\limits_{x\in S}F(x,u_{+}(x))%
\end{array}%
\end{equation*}%
and we will use both notations when necessary.

We will show that critical points of the functional $J$ correspond to the
solutions of problem (\ref{uklad2+}).

\begin{theorem}
\label{critical_point}\textit{The point }$u\in A$ \textit{is a critical
point to }$J$ if and only if it satisfies\ (\ref{uklad2+}).
\end{theorem}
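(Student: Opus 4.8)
The plan is to exploit that $A$ is a finite-dimensional (hence reflexive and complete) Euclidean space, so that $J$ is of class $C^{1}$ and a point $u\in A$ is a critical point precisely when its Fr\'echet derivative vanishes, i.e. $\langle J^{\prime }(u),v\rangle =0$ for every $v\in A$. Because every $v\in A$ vanishes on $\partial S$ and is otherwise arbitrary on $S$, testing against the coordinate functions $e_{z}$ ($z\in S$, equal to $1$ at $z$ and $0$ elsewhere) shows that $J^{\prime }(u)=0$ is equivalent to the system of $\left\vert S\right\vert $ scalar equations $\partial J/\partial u(z)=0$, one for each interior vertex. Thus the proof reduces to computing this derivative and matching it, coordinate by coordinate, with (\ref{uklad2+}).

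First I would differentiate term by term. For the energy term $\Phi (u)=\frac{1}{2}\int_{\overline{S}}\frac{1}{p}\nabla _{p,\omega }u\circ \nabla _{\omega }u$ the chain rule gives $\langle \Phi ^{\prime }(u),v\rangle =\frac{1}{2}\int_{\overline{S}}\nabla _{p,\omega }u\circ \nabla _{\omega }v$; note that the weight $\frac{1}{p(x)}$ cancels exactly the exponent produced by differentiating $\left\vert u(y)-u(x)\right\vert ^{p(x)}$, so no spurious logarithmic terms appear even though $p$ is non-constant. Invoking the integration-by-parts identity (\ref{2calki}) then turns this into $\int_{\overline{S}}(-\Delta _{p,\omega }u)\,v=\sum_{x\in S}(-\Delta _{p,\omega }u(x))v(x)$, the boundary vertices dropping out since $v\in A$. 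The second term $\int_{S}\frac{1}{p}q\left\vert u\right\vert ^{p}$ differentiates directly to $\sum_{x\in S}q(x)\left\vert u(x)\right\vert ^{p(x)-2}u(x)\,v(x)$, which is legitimate because $p\geq 2$ makes $t\mapsto \left\vert t\right\vert ^{p(x)}$ continuously differentiable with derivative $p(x)\left\vert t\right\vert ^{p(x)-2}t$.

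The delicate term is $-\lambda \int_{S}F_{u_{+}}$. Here I would use that $t\mapsto F(x,t_{+})$ is continuously differentiable with derivative $f(x,t_{+})$ (a one-variable computation, the only subtle value being $t=0$), so that the derivative of $u\mapsto -\lambda \int_{S}F_{u_{+}}$ in the direction $v$ equals $-\lambda \sum_{x\in S}f(x,u_{+}(x))v(x)$. Collecting the three contributions and using that $v(z)$, $z\in S$, is arbitrary, the condition $J^{\prime }(u)=0$ becomes exactly $-\Delta _{p,\omega }u(x)+q(x)\left\vert u(x)\right\vert ^{p(x)-2}u(x)=\lambda f(x,u_{+}(x))$ for every $x\in S$, while the membership $u\in A$ encodes the Dirichlet condition $u=0$ on $\partial S$; this is precisely (\ref{uklad2+}). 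The converse direction is then immediate, since each equation of the system is read off from a single coordinate of $J^{\prime }(u)$, and the computation runs backwards verbatim.

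I expect the main obstacle to be the energy-term step: correctly identifying the directional derivative $\frac{1}{2}\int_{\overline{S}}\nabla _{p,\omega }u\circ \nabla _{\omega }v$ with the pointwise operator $-\Delta _{p,\omega }u$ through (\ref{2calki}) and the symmetry $\omega (x,y)=\omega (y,x)$, since in the anisotropic regime one must track carefully the exponent $p(x)$ attached to each vertex when the roles of $x$ and $y$ are interchanged in the double sum. The secondary delicate point is the differentiability of $u\mapsto F(x,u_{+}(x))$, which I would dispatch by the explicit evaluation of $\frac{d}{dt}F(x,t_{+})$ at and away from $t=0$.
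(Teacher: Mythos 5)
Your proposal is correct and follows essentially the same route as the paper: compute the directional derivative of $J$ term by term, convert the energy term via the identity (\ref{2calki}), and test against the indicator functions of the interior vertices to read off the equation coordinate by coordinate, the membership $u\in A$ supplying the Dirichlet condition. The only differences are cosmetic --- the paper phrases the derivative through $\varphi(\varepsilon)=J(u+\varepsilon v)$ rather than through $J'$ directly, and your formula $\frac{d}{dt}F(x,t_{+})=f(x,t_{+})$ carries the same minor imprecision for $t\leq 0$ that the paper itself uses in (\ref{nr_poch}).
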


\begin{proof}
Take an arbitrary $u\in A$. Let $\varphi :\mathbb{R}\longrightarrow \mathbb{R%
}$ be given by $\varphi (\varepsilon )=J\left( u+\varepsilon v\right) ,$
where $v\in A$ is a fixed nonzero direction. Then%
\begin{equation*}
\begin{array}{l}
\varphi ^{^{\prime }}(\varepsilon )=\frac{1}{2}\dsum\limits_{x,y\in 
\overline{S}}\left\vert (u+\varepsilon v)(y)-(u+\varepsilon v)(x)\right\vert
^{p(x)-2}\bigskip \\ 
((u+\varepsilon v)(y)-(u+\varepsilon v)(x))(v(y)-v(x))\omega (x,y)+\bigskip
\bigskip \\ 
\dsum\limits_{x\in S}q(x)\left\vert (u+\varepsilon v)(x)\right\vert
^{p(x)-2}(u+\varepsilon v)(x)v(x)-\lambda \dsum\limits_{x\in
S}f(x,(u_{+}+\varepsilon v)(x))v(x).%
\end{array}%
\end{equation*}%
Letting $\varepsilon =0$ we have%
\begin{equation}
\begin{array}{l}
\varphi ^{^{\prime }}(0)=\frac{1}{2}\dsum\limits_{x,y\in \overline{S}%
}\left\vert u(y)-u(x)\right\vert ^{p(x)-2}(u(y)-u(x))(v(y)-v(x))\omega
(x,y)+\bigskip \\ 
\dsum\limits_{x\in S}q(x)\left\vert u(x)\right\vert
^{p(x)-2}u(x)v(x)-\lambda \dsum\limits_{x\in S}f(x,u_{+}(x))v(x).%
\end{array}
\label{nr_poch}
\end{equation}%
Thus by (\ref{2calki}) 
\begin{equation}
\varphi ^{^{\prime }}(0)=\underset{\overline{S}}{\int }(-\Delta _{p,\omega
}u)v+\underset{S}{\int }q\left\vert u\right\vert ^{p-2}uv-\lambda \underset{S%
}{\int }f_{u_{+}}v.  \label{f_prim}
\end{equation}%
Let us fix $x\in S$ and let us define a function $v:\overline{S}\rightarrow 
\mathbb{R}$ by the following formula%
\begin{equation*}
v(w)=\left\{ 
\begin{array}{c}
1\text{ for }x=w \\ 
0\text{ otherwise.}%
\end{array}%
\right.
\end{equation*}%
Then we see from (\ref{f_prim}) that%
\begin{equation*}
-\Delta _{p(x),\omega }u(x)+q(x)\left\vert u(x)\right\vert
^{p(x)-2}u(x)-\lambda f(x,u_{+}(x))=0.
\end{equation*}%
Since $x\in S$ was fixed arbitrarily we get 
\begin{equation*}
-\Delta _{p(x),\omega }u(x)+q(x)\left\vert u(x)\right\vert
^{p(x)-2}u(x)-\lambda f(x,u_{+}(x))=0\text{ for all }x\in S.\text{ }
\end{equation*}%
Thus if $u\in A$ is a critical point the functional $J$ it is a solution to
problem (\ref{uklad2+}). It is easy to see that every solution to problem (%
\ref{uklad2+}) it is also a critical point to the functional $J.\bigskip $
\end{proof}

One important remark is in order as concerns the action functional $J$
(given by the formula (\ref{action})). In the discrete boundary value
problem one may take either term connected with the difference operator or
else with the nonlinearity as the leading one since all norms on a finite
dimensional space are equivalent. In our case such approach is not possibly
because of the presence of the weight $\omega $ for which we cannot derive
suitable inequalities as given in Section \ref{SecIneq} and by the fact that
we investigate the existence of positive solutions. Thus we shall use mainly
term $\underset{S}{\int }\frac{1}{p}q\left\vert u\right\vert ^{p}$ as the
leading term in our investigations.

\section{Existence of positive solutions}

In this section we will seek positive solutions to problem (\ref{uklad}). By
a positive solution to problem (\ref{uklad}) we mean such a function $u:%
\overline{S}\rightarrow \mathbb{R}$ which satisfies the given equation on $%
\overline{S}$, the boundary conditions on $\partial S$ and it has only
positive values on $S$. Positive solutions to (\ref{uklad}) are investigated
in the space $A$\ considered with the norm (\ref{norm}).

Put 
\begin{equation*}
u_{+}(x)=\max \{u(x),0\},\text{ \ \ }u_{-}(x)=\max \{-u(x),0\}\text{ \ for
all }x\in \overline{S}.
\end{equation*}%
It is easy to see that for all $x\in \overline{S}$ we have%
\begin{equation*}
\begin{array}{l}
u_{+}(x),u_{-}(x)\geq 0\text{ };\bigskip \\ 
u(x)=u_{+}(x)-u_{-}(x);\bigskip \\ 
u_{+}(x)\cdot u_{-}(x)=0;\bigskip \\ 
\left\vert u(x)\right\vert =u_{+}(x)+u_{-}(x).\bigskip \\ 
\left\vert u(x)_{+}\right\vert \leq \left\vert u(x)\right\vert .%
\end{array}%
\end{equation*}

Let us formulate an auxiliary result which plays an important role in
proving all the existence results in this section. This result shows that
any solution to (\ref{uklad2+}) is in fact a positive solution and
simultaneously it is the positive solution to (\ref{uklad}). It may be
viewed as a kind of a discrete maximum principle.$\bigskip $

Assume that

\textit{\textbf{(f.0)}} \textit{The function }$f$\textit{\ takes positive
values for all }$x\in S$\textit{\ and all }$t\geq 0.\bigskip $

\begin{lemma}
\label{positive}Assume that \textit{\textbf{(f.0) }}holds. Assume that $u\in
A$ is a solution to problem (\ref{uklad2+}). Then $u$\textit{\ has only
positive values on} $S$ and moreover $u$ is a positive solution to (\ref%
{uklad}).
\end{lemma}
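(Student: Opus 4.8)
The plan is to establish a discrete maximum principle by testing equation (\ref{uklad2+}) at the vertex where $u$ attains its smallest value. First I would set $m:=\min_{x\in S}u(x)$, pick $x_{0}\in S$ with $u(x_{0})=m$, and argue by contradiction assuming $m\leq 0$. Since $u$ vanishes on $\partial S$, the assumption $m\leq 0$ forces $u(x_{0})\leq u(y)$ not only for every interior $y\in S$ but also for every boundary $y\in\partial S$ (where $u(y)=0\geq m$); hence $x_{0}$ is in fact a global minimiser of $u$ over all of $\overline{S}$, so that $u(x_{0})-u(y)\leq 0$ for every $y\in\overline{S}$.

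The key step is to read off the sign of each term of (\ref{uklad2+}) at $x_{0}$. Writing $-\Delta_{p(x_{0}),\omega}u(x_{0})=\sum_{y\in\overline{S}}|u(x_{0})-u(y)|^{p(x_{0})-2}(u(x_{0})-u(y))\omega(x_{0},y)$ and using $p(x_{0})\geq 2$, each summand is a product of the nonnegative factor $|u(x_{0})-u(y)|^{p(x_{0})-2}$, the nonpositive factor $(u(x_{0})-u(y))$, and the nonnegative weight $\omega(x_{0},y)$, so $-\Delta_{p(x_{0}),\omega}u(x_{0})\leq 0$. Likewise, since $q(x_{0})>0$ and $u(x_{0})\leq 0$, the term $q(x_{0})|u(x_{0})|^{p(x_{0})-2}u(x_{0})\leq 0$. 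Consequently the whole left-hand side of (\ref{uklad2+}) evaluated at $x_{0}$ is $\leq 0$.

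On the other hand $u(x_{0})\leq 0$ gives $u_{+}(x_{0})=0$, so the right-hand side equals $\lambda f(x_{0},0)$, which is strictly positive because $\lambda>0$ and \textbf{(f.0)} guarantees $f(x_{0},0)>0$. This contradicts the sign of the left-hand side, so in fact $m>0$, i.e. $u(x)>0$ for every $x\in S$. Finally, positivity yields $u_{+}(x)=u(x)$ on $S$, whence $f(x,u_{+}(x))=f(x,u(x))$ and problem (\ref{uklad2+}) reduces to (\ref{uklad}); thus $u$ solves (\ref{uklad}) and is a positive solution.

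I expect the only delicate point to be the reduction to a global minimiser over $\overline{S}$: one must combine the Dirichlet condition with the sign assumption $m\leq 0$ to guarantee that all the differences $u(x_{0})-u(y)$ are nonpositive, since otherwise the sign of the graph $p(\cdot)$-Laplacian term could not be controlled. Everything else is a direct sign inspection, with the convention $|t|^{p-2}t=0$ at $t=0$ removing any ambiguity when consecutive values coincide.
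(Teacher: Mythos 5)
Your proof is correct, and it takes a genuinely different route from the paper's. You argue pointwise: evaluate the equation at a vertex $x_{0}$ where $u$ attains its minimum over $S$, note that the Dirichlet condition together with the assumption $\min_{S}u\leq 0$ makes $x_{0}$ a minimiser over all of $\overline{S}$, and then read off that both terms on the left of (\ref{uklad2+}) are nonpositive while the right-hand side $\lambda f(x_{0},u_{+}(x_{0}))=\lambda f(x_{0},0)$ is strictly positive under \textbf{(f.0)} --- a clean discrete minimum principle that delivers strict positivity in a single stroke. The paper instead works with the weak formulation: it tests (\ref{nr_poch}) with $v=u_{-}$, uses the sign identity $(u(y)-u(x))(u_{-}(y)-u_{-}(x))\leq 0$ and $u(x)u_{-}(x)\leq 0$ to force both sides of the resulting identity to vanish, concludes $u_{-}\equiv 0$ on $S$, and then runs a separate pointwise contradiction argument to exclude zeros of $u$ on $S$. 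Your version is more elementary and shorter, avoiding the test-function machinery; the paper's version has the advantage of introducing the inequality (\ref{u_u_}) and the $u_{\pm}$ decomposition in a form that is reused later (e.g., the analogous estimate (\ref{u+u-}) in the proof of the Palais--Smale condition), and its weak-formulation style would survive in settings where pointwise evaluation of the equation is less natural. Both arguments rely only on $p(x)\geq 2$, $q>0$, $\lambda>0$ and \textbf{(f.0)}, so there is no loss of generality in your approach.
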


\begin{proof}
A straightforward computation shows that for every$\ x,y\in \overline{S}$
the following inequality 
\begin{equation}
\left( u(y)-u(x)\right) \left( u_{-}(y)-u_{-}(x)\right) \leq 0  \label{u_u_}
\end{equation}%
holds. Indeed, 
\begin{equation*}
\begin{array}{l}
\left( u(y)-u(x)\right) \left( u_{-}(y)-u_{-}(x)\right) =\bigskip \\ 
\left( \left( u_{+}(y)-u_{+}(x)\right) -\left( u_{-}(y)-u_{-}(x)\right)
\right) \left( u_{-}(y)-u_{-}(x)\right) =\bigskip \\ 
-\left( u_{+}(y)u_{-}(x)+u_{+}(x)u_{-}(y)+\left( u_{-}(y)-u_{-}(x)\right)
^{2}\right) \leq 0.%
\end{array}%
\end{equation*}%
Assume that $u\in A$ is a solution to (\ref{uklad2+}). Equating (\ref%
{nr_poch}) to $0$ and taking $v=u_{-}$ we obtain%
\begin{equation}
\begin{array}{l}
\frac{1}{2}\dsum\limits_{x,y\in \overline{S}}\left\vert u(y)-u(x)\right\vert
^{p(x)-2}(u(y)-u(x))(u_{-}(y)-u_{-}(x))\omega (x,y)=\bigskip \\ 
\lambda \underset{x\in S}{\tsum }f(x,u_{+}(x))u_{-}(x)-\underset{x\in S}{%
\tsum }q(x)\left\vert u(x)\right\vert ^{p(x)-2}u(x)u_{-}(x).%
\end{array}
\label{eqqu}
\end{equation}%
Since $f$ and $q$ are functions with positive values only, $\lambda >0$ and
since%
\begin{equation*}
u(x)u_{-}(x)=\left( u_{+}(x)-u_{-}(x)\right)
u_{-}(x)=u_{+}(x)u_{-}(x)-\left( u_{-}(x)\right) ^{2}\leq 0
\end{equation*}%
the term on the right is non-negative. Due to (\ref{u_u_}) the term on the
left is non-positive, therefore equation (\ref{eqqu}) holds if {the both
terms are equal zero, which} leads to relation $u_{-}(x)=0$ for all $x\in S.$
Thus $u(x)=u_{+}(x)$ for all $x\in S.$ Moreover $u\left( x\right) \neq 0$
for all $x\in S$. Indeed, assume that there exists $x_{0}\in S$ such that $%
u(x_{0})=0$. Then by (\ref{uklad}) we have%
\begin{equation*}
-\underset{\{y\in \overline{S}:\text{ }y\neq x_{0}\}}{\tsum }\left\vert
u(y)\right\vert ^{p(x_{0})-1}\omega (x_{0},y)=\lambda f(x_{0},0).
\end{equation*}%
Since the term on the left is non-positive and the term on the right
positive we have a contradiction. Thus $u\left( x\right) \neq 0$ for all $%
x\in S,$ it follows that $u$ is a positive solution to (\ref{uklad}).$%
\bigskip $
\end{proof}

To show that problem (\ref{uklad}) has positive solutions we need the
following growth conditions$\bigskip $

\textit{\textbf{(f.1)}} \textit{There exist functions }$m_{1},m_{2}:S%
\rightarrow \lbrack 2,+\infty )$\textit{\ and functions }$\varphi
_{1},\varphi _{2},\psi _{1},\psi _{2}:S\rightarrow (0,+\infty )$\textit{\
such that}%
\begin{equation*}
\psi _{1}(x)+\varphi _{1}(x)t^{m_{1}(x)-1}\leq f(x,t)\leq \varphi
_{2}(x)t^{m_{2}(x)-1}+\psi _{2}(x)
\end{equation*}%
\textit{for all }$x\in S$\textit{\ and all }$t\geq 0.\bigskip $

Note that \textbf{(f.1) }implies \textbf{(f.0). }Using the definition of $F$
we get by integration$\bigskip $

\textit{\textbf{(F.1)}}\textbf{\ }\textit{For functions }$%
m_{1},m_{2}:S\rightarrow \lbrack 2,+\infty )$\textit{\ and functions }$%
\varphi _{1},\varphi _{2},\psi _{1},\psi _{2}:S\rightarrow (0,+\infty )$%
\textit{\ satisfying \textbf{(f.1) }we have}%
\begin{equation*}
\psi _{1}(x)t+\frac{\varphi _{1}(x)}{m_{1}(x)}t^{m_{1}(x)}\leq F(x,t)\leq 
\frac{\varphi _{2}(x)}{m_{2}(x)}t^{m_{2}(x)}+\psi _{2}(x)t
\end{equation*}%
\textit{for all }$x\in S$\textit{\ and all }$t\geq 0.\bigskip $

Let us introduce the following notations%
\begin{equation*}
\begin{array}{l}
q^{-}=\underset{x\in S}{\min }\text{ }q\left( x\right) ,\text{ \ \ \ \ \ }%
q^{+}=\underset{x\in S}{\max }\text{ }q\left( x\right) ,\bigskip \\ 
m_{i}^{-}=\underset{x\in S}{\min }\text{ }m\left( x\right) ,\text{ \ \ \ }%
m_{i}^{+}=\underset{x\in S}{\max }\text{ }m\left( x\right) ,\bigskip \text{ }%
i=1,2\text{,} \\ 
\varphi _{1}^{-}=\underset{x\in S}{\min }\text{ }\varphi _{1}\left( x\right)
,\text{ \ \ \ }\varphi _{2}^{+}=\underset{x\in S}{\max }\text{ }\varphi
_{2}\left( x\right) ,\text{ }\bigskip \\ 
\psi _{1}^{-}=\underset{x\in S}{\min }\text{ }\psi _{1}\left( x\right) ,%
\text{ \ \ \ }\psi _{2}^{+}=\underset{x\in S}{\max }\text{ }\psi _{2}\left(
x\right) ,\text{\ }%
\end{array}%
\end{equation*}%
where $m_{1},m_{2},\varphi _{1},\varphi _{2},\psi _{1},\psi _{2}$ are
functions defined in (\textbf{f.1).}$\bigskip $

Now we give an example to illustrate condition \textit{(\textbf{f.1).}}

\begin{example}
\label{przyklad}Let $m:S\rightarrow \lbrack 2,+\infty )$ and $\varphi ,\psi
:S\rightarrow (0,+\infty ).$ The function $f:S\times \mathbb{R\rightarrow R}$
given by the formula 
\begin{equation*}
f(x,t)=(t+1)^{1-e^{-t^{2}}+m(x)}\left( \frac{2}{\pi }\arctan t+\varphi
(x)\right) +\left\vert \sin t\right\vert +\psi (x)+1
\end{equation*}%
is a \textit{continuous function with only positive values for all }$x\in S$%
\textit{\ and all }$t\geq 0$ and%
\begin{equation*}
\begin{array}{l}
1+\psi (x)+t^{m(x)}\varphi (x)\leq f(x,t)\leq \bigskip \\ 
2^{m(x)}t^{1+m(x)}\left( 1+\varphi (x)\right) +2^{m(x)}\left( 1+\varphi
(x)\right) +\psi (x)+2.%
\end{array}%
\end{equation*}%
\textit{for all }$x\in S$\textit{\ and all }$t\geq 0,$ so the growth
conditions are satisfied with $m_{1}(x)=m(x),$ $m_{2}(x)=m(x)+1,$ $\varphi
_{1}(x)=\varphi (x),$ $\varphi _{2}(x)=2^{m(x)}\left( 1+\varphi (x)\right) ,$
$\psi _{1}(x)=\psi (x)+1$ and $\psi _{2}(x)=\psi (x)+2$.
\end{example}

We will investigate the existence of positive solutions applying different
methods, since depending on a relation between functions $m_{1},m_{2}$ and $%
p $ the functional $J$ has different properties.

\subsection{Results by the direct variational approach}

We start with a case $m_{2}^{+}<p^{-}.$ Then for all $\lambda >0$ the
functional $J$ is coercive and we can apply the direct variational method,
Theorem \ref{mbrw}. The case $m_{2}^{+}=p^{-}$ is also undertaken, but in
this case there is some restrictions on the parameter $\lambda .$

\begin{theorem}
\label{TheoDirect}Let $m_{2}^{+}<p^{-}.$ Assume that condition \textbf{(f.1)}
is satisfied. Then for all $\lambda >0$ problem (\ref{uklad}) has at least
one positive solution.
\end{theorem}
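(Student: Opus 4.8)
The plan is to apply the Weierstrass Theorem (Theorem \ref{mbrw}) to the functional $J$ on the finite dimensional space $A$. Since $A$ is finite dimensional, it is reflexive, and $J\in C^1(A,\mathbb{R})$ (as already established implicitly in the proof of Theorem \ref{critical_point}); moreover on a finite dimensional space continuity implies weak lower semicontinuity, so the only substantial hypothesis to verify is coercivity. Once coercivity is shown, Theorem \ref{mbrw} produces a minimizer $\overline{u}\in A$ which is a critical point of $J$, hence by Theorem \ref{critical_point} a solution of problem (\ref{uklad2+}). Since \textbf{(f.1)} implies \textbf{(f.0)}, Lemma \ref{positive} then upgrades this solution to a positive solution of the original problem (\ref{uklad}). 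This last step is the clean payoff and requires no further work beyond citing the lemma.

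The main obstacle, and the heart of the argument, is establishing that $J$ is coercive when $m_2^+ < p^-$. The strategy is to bound $J(u)$ from below by something that tends to $+\infty$ as $\left\Vert u\right\Vert\to\infty$. The first two terms of $J$ are nonnegative (each summand $\frac{1}{p(x)}\left\vert u(y)-u(x)\right\vert^{p(x)}\omega(x,y)$ and $\frac{1}{p(x)}q(x)\left\vert u(x)\right\vert^{p(x)}$ is $\geq 0$), so I would discard the gradient term and keep the potential term, bounding it below using \textbf{(a.4)}: since $\frac{1}{p(x)}\geq \frac{1}{p^+}$ and $q(x)\geq q^-$,
\begin{equation*}
\sum_{x\in S}\frac{1}{p(x)}q(x)\left\vert u(x)\right\vert^{p(x)}\geq \frac{q^-}{p^+}\left(2^{-\frac{p^-}{2}}\left\vert \partial S\right\vert^{\frac{p^-}{2}}\left\vert \overline{S}\right\vert^{1-p^-}\left\Vert u\right\Vert^{p^-}-\left\vert S\right\vert\right).
\end{equation*}
This gives a lower bound growing like a constant times $\left\Vert u\right\Vert^{p^-}$.

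For the nonlinearity term I would use the upper estimate in \textbf{(F.1)}, namely $F(x,u_+(x))\leq \frac{\varphi_2(x)}{m_2(x)}u_+(x)^{m_2(x)}+\psi_2(x)u_+(x)$. Using $u_+(x)\leq \left\vert u(x)\right\vert$ together with inequalities \textbf{(a.6)} (or \textbf{(a.1)}) to control $\sum_{x\in S}\left\vert u(x)\right\vert^{m_2(x)}$ and $\sum_{x\in S}\left\vert u(x)\right\vert$ by powers of $\left\Vert u\right\Vert$, I would bound the subtracted term from above by an expression that grows at most like a constant times $\left\Vert u\right\Vert^{m_2^+}$ (plus lower order terms and constants). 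Collecting everything, $J(u)$ is bounded below by
\begin{equation*}
C_1\left\Vert u\right\Vert^{p^-}-\lambda C_2\left\Vert u\right\Vert^{m_2^+}-C_3
\end{equation*}
for positive constants $C_1,C_2,C_3$ depending on the data but not on $u$. Because $m_2^+<p^-$, the leading power $\left\Vert u\right\Vert^{p^-}$ dominates for every fixed $\lambda>0$, so $J(u)\to+\infty$ as $\left\Vert u\right\Vert\to\infty$, giving coercivity and completing the proof. The delicate bookkeeping lies in matching the various exponents from \textbf{(a.1)}, \textbf{(a.4)} and \textbf{(a.6)} correctly so that the dominant positive term really carries exponent $p^-$ while every negative contribution carries an exponent at most $m_2^+<p^-$.
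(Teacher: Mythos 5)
Your proposal is correct and follows essentially the same route as the paper: discard the nonnegative gradient term, bound the potential term below via \textbf{(a.4)} with the constant $\frac{q^{-}}{p^{+}}$, bound $\sum_{x\in S}F(x,u_{+}(x))$ above via \textbf{(F.1)} together with \textbf{(a.6)} and \textbf{(a.7)}, and conclude coercivity from $m_{2}^{+}<p^{-}$ before invoking Theorem \ref{mbrw} and Lemma \ref{positive}. No gaps.
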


\begin{proof}
It suffices to show that the functional $J$ is coercive on the set $A$ so
that to apply Theorem \ref{mbrw}. By \textbf{(a.7) }we have\textbf{\ }%
\begin{equation}
\dsum\limits_{x\in S}u_{+}(x)\leq \dsum\limits_{x\in S}\left\vert
u_{+}(x)\right\vert \leq \dsum\limits_{x\in S}\left\vert u(x)\right\vert
\leq \left\vert S\right\vert \underset{x\in S}{\max }\left\vert
u(x)\right\vert \leq \left\vert S\right\vert \left\vert \overline{S}%
\right\vert ^{\frac{1}{2}}\left\Vert u\right\Vert .  \label{u+norma}
\end{equation}%
Therefore by \textbf{(F.1) }and\textbf{\ (a.6)} for sufficiently large $%
\left\Vert u\right\Vert $ we obtain%
\begin{equation}
\dsum\limits_{x\in S}F(x,u_{+}(x))\leq \frac{\varphi _{2}^{+}}{m_{2}^{-}}%
\left( \left\vert S\right\vert \left\Vert u\right\Vert
^{m_{2}^{+}}+\left\vert S\right\vert \right) +\psi _{2}^{+}\left\vert
S\right\vert \left\vert \overline{S}\right\vert ^{\frac{1}{2}}\left\Vert
u\right\Vert ^{m_{2}^{+}}.  \label{dir1}
\end{equation}%
\textbf{\ }By \textbf{(a.4)} and (\ref{dir1}) for sufficiently large $%
\left\Vert u\right\Vert $ since $\underset{\overline{S}}{\int }\frac{1}{p}%
\nabla _{p,\omega }u\circ \nabla _{\omega }u\geq 0$ we get%
\begin{equation*}
\begin{array}{l}
J(u)\geq \underset{S}{\int }\frac{1}{p}q\left\vert u\right\vert ^{p}-\lambda 
\underset{S}{\int }F_{u_{+}}\geq \bigskip \\ 
\frac{q^{-}}{p^{+}}\left( 2^{-\frac{p^{-}}{2}}\left\vert \partial
S\right\vert ^{\frac{p^{-}}{2}}\left\vert \overline{S}\right\vert
^{1-p^{-}}\left\Vert u\right\Vert ^{p^{-}}-\left\vert S\right\vert \right)
-\bigskip \\ 
\lambda \left( \frac{\varphi _{2}^{+}}{m_{2}^{-}}\left\vert S\right\vert
\left\Vert u\right\Vert ^{m_{2}^{+}}+\frac{\varphi _{2}^{+}}{m_{2}^{-}}%
\left\vert S\right\vert +\psi _{2}^{+}\left\vert S\right\vert \left\vert 
\overline{S}\right\vert ^{\frac{1}{2}}\left\Vert u\right\Vert
^{m_{2}^{+}}\right) .%
\end{array}%
\end{equation*}%
Since $m_{2}^{+}<p^{-},$ so $J$ is coercive on $A.$

The assumptions of Theorem \ref{mbrw} are satisfied and by Lemma \ref%
{positive} problem (\ref{uklad}) has at least one positive solution\bigskip .
\end{proof}

Put%
\begin{equation*}
\lambda _{1}:=\frac{\frac{q^{-}}{p^{+}}2^{-\frac{p^{-}}{2}}\left\vert
\partial S\right\vert ^{\frac{p^{-}}{2}}\left\vert \overline{S}\right\vert
^{1-p^{-}}}{\left( \frac{\varphi _{2}^{+}}{m_{2}^{-}}+\psi
_{2}^{+}\left\vert \overline{S}\right\vert ^{\frac{1}{2}}\right) \left\vert
S\right\vert }.\bigskip
\end{equation*}

\begin{remark}
Let $m_{2}^{+}=p^{-}.$ Assume that condition \textbf{(f.1)} is satisfied.
Then for all $\lambda \in (0,\lambda _{1})$ problem (\ref{uklad}) has at
least one positive solution.
\end{remark}

\begin{proof}
The assertion follows immediately by the proof of Theorem \ref{TheoDirect}.
\end{proof}

\subsection{Result by the Ekeland variational principle}

We have shown that the problem under consideration have at least one
positive solution for all $\lambda >0$ in case $m_{2}^{+}<p^{-}.$ In this
subsection we apply Ekeland's variational principle in order to prove the
existence of at least one positive solution for our problem for every
parameter $\lambda $ from some interval $(0,\lambda _{2})$ with no
inequality relation required on functions $m_{1}$, $m_{2}$ and $p$ apart
from the assumption that $p^{-}\neq m_{1}^{+}$ at the expense of taking a
suitable parameter interval.

Put%
\begin{equation*}
\lambda _{2}:=\frac{\frac{q^{-}}{p^{+}}2^{-\frac{p^{+}}{2}}\left\vert
\partial S\right\vert ^{\frac{p^{+}}{2}}\left\vert \overline{S}\right\vert
^{1-p^{+}}\left\vert \overline{S}\right\vert ^{-\frac{p^{+}}{2}}}{\left( 
\frac{\varphi _{2}^{+}}{m_{2}^{-}}\left\vert \overline{S}\right\vert ^{-%
\frac{m_{2}^{-}}{2}}+\psi _{2}^{+}\right) \left\vert S\right\vert }
\end{equation*}%
and 
\begin{equation*}
\Omega :=\left\{ u\in A:\left\Vert u\right\Vert \leq \left\vert \overline{S}%
\right\vert ^{-\frac{1}{2}}\right\} .
\end{equation*}

\begin{theorem}
\label{Ekeland}Let $p^{-}\neq m_{1}^{+}$. \textit{Assume that condition 
\textbf{(f.1)} is satisfied. }Then for any $\lambda \in (0,\lambda _{2})$
problem (\ref{uklad}) has at least one positive solution.
\end{theorem}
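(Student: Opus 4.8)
The plan is to apply Ekeland's variational principle (Theorem \ref{Ekeland copy(1)}) to the functional $J$ restricted to the closed ball $\overline{\Omega}$, and then to upgrade the approximate minimizer produced by Ekeland into a genuine critical point by exploiting that the minimum is attained in the \emph{interior} of $\Omega$. The key structural fact I would establish first is the strict inequality
\begin{equation*}
\inf_{u\in \overline{\Omega}}J(u)<\inf_{u\in \partial \Omega}J(u),
\end{equation*}
which places the infimum over $\overline{\Omega}$ strictly below the boundary value and thus guarantees that any near-minimizer lies away from $\partial\Omega$. This is precisely the setup from \cite{BerJebMawhin} recalled after Theorem \ref{KKT-THEO}, and once this strict inequality holds, together with the Palais--Smale condition, the existence of a critical point $x_0$ with $J(x_0)=\inf_{\overline\Omega}J$ follows.

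First I would show that the boundary value $\inf_{u\in\partial\Omega}J(u)$ is bounded below by a positive quantity, or at least that it exceeds $J$ evaluated at a suitably chosen small interior point. On $\partial\Omega$ we have $\|u\|=|\overline{S}|^{-1/2}$, so using the lower bound \textbf{(a.4)} (or \textbf{(a.3)}) on the term $\int_S \frac{1}{p}q|u|^p$ together with the nonnegativity of the gradient term, and bounding the nonlinear term from above via \textbf{(F.1)}, \textbf{(a.6)} and \textbf{(a.7)}, I would obtain a lower estimate of the form
\begin{equation*}
J(u)\geq \frac{q^{-}}{p^{+}}2^{-\frac{p^{+}}{2}}|\partial S|^{\frac{p^{+}}{2}}|\overline{S}|^{1-p^{+}}\|u\|^{p^{+}}-\lambda\Big(\tfrac{\varphi_2^{+}}{m_2^{-}}|S|\|u\|^{m_2^{+}}+\psi_2^{+}|S||\overline{S}|^{\frac12}\|u\|^{m_2^{+}}\Big)
\end{equation*}
valid for $\|u\|\le|\overline{S}|^{-1/2}\le 1$ (here the exponents $p^{+}$ and $m_2^{+}$ are chosen as the relevant extremes when $\|u\|\le 1$, since small values make the largest exponent dominate for lower bounds and care must be taken with which exponents appear). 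Evaluating the leading positive term on the sphere $\|u\|=|\overline{S}|^{-1/2}$ and comparing with the subtracted nonlinear term, the definition of $\lambda_2$ is engineered exactly so that for $\lambda<\lambda_2$ the boundary value stays strictly positive.

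Next I would produce an interior point where $J$ is strictly smaller. Using the lower growth bound in \textbf{(F.1)}, namely $F(x,t)\ge \psi_1(x)t+\frac{\varphi_1(x)}{m_1(x)}t^{m_1(x)}$, I would take a small positive multiple $u=tu_0$ of a fixed nonnegative direction $u_0$ and show that because of the assumption $p^{-}\neq m_1^{+}$, the competition between the potential term (of order $\|u\|^{p^{-}}$) and the nonlinear term (of order $\|u\|^{m_1^{+}}$ or $\|u\|^{m_1^{-}}$) makes $J(tu_0)<0=J(0)$ for small $t>0$ in the relevant case, or at any rate strictly below the boundary infimum; this is where the hypothesis $p^{-}\neq m_1^{+}$ is essential, since equality of these exponents would make the sign of the dominant term ambiguous. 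This yields $\inf_{\overline\Omega}J<\inf_{\partial\Omega}J$.

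Finally I would invoke Ekeland's principle on the complete metric space $\overline\Omega$: since $J$ is continuous (hence lower semicontinuous) and bounded below on the compact-looking closed ball, for each $\varepsilon_n\to 0$ Ekeland yields $u_n\in\overline\Omega$ with $J(u_n)\le\inf_{\overline\Omega}J+\varepsilon_n$ and $J(u_n)\le J(u)+\varepsilon_n\|u-u_n\|$ for all $u$. The strict inequality above forces $u_n$ to lie in the open ball for large $n$, so variations in every direction are admissible and the second Ekeland inequality gives $\|J'(u_n)\|\le\varepsilon_n$, i.e.\ $J'(u_n)\to 0$ while $J(u_n)$ is bounded. Since $A$ is finite dimensional the Palais--Smale condition holds automatically, so a subsequence of $\{u_n\}$ converges to some $x_0\in\Omega$ with $J'(x_0)=0$; by Theorem \ref{critical_point} this $x_0$ solves (\ref{uklad2+}) and by Lemma \ref{positive} it is a positive solution of (\ref{uklad}). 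The main obstacle I anticipate is the careful bookkeeping of the exponents in the boundary estimate: one must verify that the particular combination of $p^{+}$, $m_2^{+}$, $m_2^{-}$ and the powers of $|\overline{S}|$ and $|\partial S|$ appearing in $\lambda_2$ really does force strict positivity of $J$ on $\partial\Omega$ for all $\lambda<\lambda_2$, which requires tracking precisely which exponent dominates on the small ball $\|u\|\le|\overline{S}|^{-1/2}$.
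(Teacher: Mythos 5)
Your strategy is the same as the paper's: prove $J>0$ on $\partial\Omega$ for $\lambda<\lambda_{2}$, exhibit an interior point where $J$ is strictly smaller so that $\inf_{\overline{\Omega}}J<\inf_{\partial\Omega}J$, run Ekeland's principle on the closed ball to produce interior near-minimizers with $J'(u_{n})\to0$, pass to a limit in the finite-dimensional space $A$, and conclude with Theorem \ref{critical_point} and Lemma \ref{positive}. The construction of the negative interior value by scaling a fixed bump, with $p^{-}\neq m_{1}^{+}$ governing which power of $t$ dominates, is also exactly what the paper does (it takes $u_{0}$ supported at a single vertex with value $t\in(0,t_{0})$).

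There is, however, a concrete error in the one step you yourself flagged as the main risk: the boundary estimate as displayed is not a valid lower bound for $J$. On $\Omega$ one has $|u(x)|\leq|\overline{S}|^{1/2}\|u\|\leq1$ by \textbf{(a.7)}, so an \emph{upper} bound on the nonlinear term must use the \emph{smallest} exponent: $|u(x)|^{m_{2}(x)}\leq|u(x)|^{m_{2}^{-}}$, which with \textbf{(a.1)} gives $\sum_{x\in S}|u_{+}(x)|^{m_{2}(x)}\leq|S|\,\|u\|^{m_{2}^{-}}$, while the $\psi_{2}$-contribution $\sum_{x\in S}\psi_{2}(x)u_{+}(x)\leq\psi_{2}^{+}|S|\,|\overline{S}|^{1/2}\|u\|$ is linear in $\|u\|$. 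Your display puts $\|u\|^{m_{2}^{+}}$ on both terms; since $\|u\|=|\overline{S}|^{-1/2}<1$ on $\partial\Omega$ we have $\|u\|^{m_{2}^{+}}<\|u\|^{m_{2}^{-}}<\|u\|$, so your right-hand side underestimates the subtracted nonlinear term and the inequality fails. It also cannot reproduce $\lambda_{2}$, whose denominator $\bigl(\frac{\varphi_{2}^{+}}{m_{2}^{-}}|\overline{S}|^{-m_{2}^{-}/2}+\psi_{2}^{+}\bigr)|S|$ is precisely the corrected bound evaluated at $\|u\|=|\overline{S}|^{-1/2}$. (Symmetrically, the \emph{lower} bound on $\int_{S}\frac{1}{p}q|u|^{p}$ correctly uses the largest exponent, $|u(x)|^{p(x)}\geq|u(x)|^{p^{+}}$ with \textbf{(a.3)}, as you wrote.) With this correction the rest of your argument goes through and coincides with the paper's proof; note also that for the strict inequality $\inf_{\overline{\Omega}}J<\inf_{\partial\Omega}J$ alone, the observation $J(0)=0<\inf_{\partial\Omega}J$ would already suffice.
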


\begin{proof}
Let $\lambda \in (0,\lambda _{2})$ be fixed. For all $u\in \Omega $ by 
\textbf{(a.7) }it follows that 
\begin{equation}
\left\vert u\left( x\right) \right\vert \leq \max_{s\in S}\left\vert u\left(
s\right) \right\vert \leq \left\vert \overline{S}\right\vert ^{\frac{1}{2}%
}\left\Vert u\right\Vert \leq 1  \label{jeden}
\end{equation}%
for all $x\in S$. By \textbf{(a.1)} for all $u\in \Omega $ we get 
\begin{equation}
\dsum\limits_{x\in S}\left\vert u_{+}(x)\right\vert ^{m_{2}(x)}\leq
\dsum\limits_{x\in S}\left\vert u(x)\right\vert ^{m_{2}(x)}\leq
\dsum\limits_{x\in S}\left\vert u(x)\right\vert ^{m_{2}^{-}}\leq \left\vert
S\right\vert \left\Vert u\right\Vert ^{m_{2}^{-}}.  \label{E5}
\end{equation}%
By \textbf{(F.1), }(\ref{E5})\textbf{\ }and (\ref{u+norma}) for all $u\in
\Omega $ we see that

\begin{equation}
\dsum\limits_{x\in S}F(x,u_{+}(x))\leq \left( \frac{\varphi _{2}^{+}}{%
m_{2}^{-}}\left\Vert u\right\Vert ^{m_{2}^{-}}+\psi _{2}^{+}\left\vert 
\overline{S}\right\vert ^{\frac{1}{2}}\left\Vert u\right\Vert \right)
\left\vert S\right\vert .  \label{E6}
\end{equation}%
Moreover by \textbf{(a.3)} for all $u\in \Omega $ we obtain%
\begin{equation}
\dsum\limits_{x\in S}\left\vert u(x)\right\vert ^{p(x)}\geq
\dsum\limits_{x\in S}\left\vert u(x)\right\vert ^{p^{+}}\geq 2^{-\frac{p^{+}%
}{2}}\left\vert \partial S\right\vert ^{\frac{p^{+}}{2}}\left\vert \overline{%
S}\right\vert ^{1-p^{+}}\left\Vert u\right\Vert ^{p^{+}}  \label{E1}
\end{equation}%
Therefore for all $u\in \partial \Omega $ by (\ref{E1}) and (\ref{E6}) we
get 
\begin{equation*}
J\left( u\right) \geq \frac{q^{-}}{p^{+}}2^{-\frac{p^{+}}{2}}\left\vert
\partial S\right\vert ^{\frac{p^{+}}{2}}\left\vert \overline{S}\right\vert
^{1-p^{+}}\left\vert \overline{S}\right\vert ^{-\frac{p^{+}}{2}}-\lambda
\left( \frac{\varphi _{2}^{+}}{m_{2}^{-}}\left\vert \overline{S}\right\vert
^{-\frac{m_{2}^{-}}{2}}+\psi _{2}^{+}\right) \left\vert S\right\vert .
\end{equation*}%
Thus for all $\lambda \in (0,\lambda _{2})$ and for all$\ u\in \partial
\Omega $ we have 
\begin{equation}
J\left( u\right) >0\text{.}  \label{Jdod}
\end{equation}%
Since $\partial \Omega $ is a closed bounded set and since $J$ is
continuous, by the classical Weierstrass theorem we see that 
\begin{equation}
\underset{u\in \partial \Omega }{\inf }J(u)=\underset{u\in \partial \Omega }{%
\min }J(u)>0.  \label{EVP_1}
\end{equation}

Put 
\begin{equation*}
t_{0}:=\min \left\{ 1,\left( \frac{2\lambda \left( \frac{\varphi _{1}^{-}}{%
m_{1}^{+}}+\psi _{1}^{-}\right) p^{-}}{\overline{\omega }^{+}\left(
2\left\vert S\right\vert +\left\vert \partial S\right\vert -1\right) +2q^{+}}%
\right) ^{\frac{1}{p^{-}-m_{1}^{+}}}\right\}
\end{equation*}%
and fix $t\in (0,t_{0}).$ Let $u_{0}\in Int\Omega $ be such a function that $%
u_{0}(x_{0})=t$ and $u_{0}(x)=0$ for any $x\in S\backslash \{x_{0}\}.$ First
note that%
\begin{equation}
\begin{array}{l}
\dsum\limits_{x,y\in S}|u_{0}(y)-u_{0}(x)|^{p(x)}= \\ 
\dsum\limits_{x\in S}\left(
|u_{0}(x_{0})-u_{0}(x)|^{p(x)}+\dsum\limits_{y\neq
x_{0}}|u_{0}(y)+u_{0}(x)|^{p(x)}\right) =\bigskip \\ 
\dsum\limits_{x\in S}\left( |t-u_{0}(x)|^{p(x)}+\left( \left\vert
S\right\vert -1\right) |u_{0}(x)|^{p(x)}\right) =\bigskip \\ 
\left( \left\vert S\right\vert -1\right) t^{p(x_{0})}+\dsum\limits_{x\neq
x_{0}}t^{p(x)}\leq 2\left( \left\vert S\right\vert -1\right) t^{p^{-}}.%
\end{array}
\label{E2}
\end{equation}%
Next we can observe that%
\begin{equation}
\begin{array}{l}
\dsum\limits_{x\in \partial S}\dsum\limits_{y\in
S}|u_{0}(y)-u_{0}(x)|^{p(x)}=\bigskip \\ 
\dsum\limits_{x\in \partial S}\left( |t-u_{0}(x)|^{p(x)}+\left( \left\vert
S\right\vert -1\right) |u_{0}(x)|^{p(x)}\right) \leq \left\vert \partial
S\right\vert t^{p^{-}},%
\end{array}
\label{E3}
\end{equation}%
and 
\begin{equation}
\dsum\limits_{x\in S}\dsum\limits_{y\in \partial
S}|u_{0}(y)-u_{0}(x)|^{p(x)}=\dsum\limits_{x\in
S}|u_{0}(x)|^{p(x)}=t^{p(x_{0})}\leq t^{p^{-}}.  \label{E4}
\end{equation}%
By (\ref{E2}), (\ref{E3}) and (\ref{E4}) we get%
\begin{equation*}
\begin{array}{l}
\dsum\limits_{x,y\in \overline{S}}|u_{0}(y)-u_{0}(x)|^{p(x)}\omega (x,y)\leq 
\overline{\omega }^{+}\dsum\limits_{x,y\in
S}|u_{0}(y)-u_{0}(x)|^{p(x)}+\bigskip \\ 
\overline{\omega }^{+}\dsum\limits_{x\in \partial S}\dsum\limits_{y\in
S}|u_{0}(y)-u_{0}(x)|^{p(x)}+\overline{\omega }^{+}\dsum\limits_{x\in
S}\dsum\limits_{y\in \partial S}|u_{0}(y)-u_{0}(x)|^{p(x)}\leq \bigskip \\ 
\overline{\omega }^{+}\left( 2\left\vert S\right\vert +\left\vert \partial
S\right\vert -1\right) t^{p^{-}}.%
\end{array}%
\end{equation*}%
Therefore since $t\in (0,t_{0})$ we have%
\begin{equation*}
J(u_{0})\leq \frac{1}{2p^{-}}\overline{\omega }^{+}\left( 2\left\vert
S\right\vert +\left\vert \partial S\right\vert -1\right) t^{p^{-}}+\frac{%
q^{+}}{p^{-}}t^{p^{-}}-\lambda \left( \frac{\varphi _{1}^{-}}{m_{1}^{+}}%
+\psi _{1}^{-}\right) t^{m_{1}^{+}}<0.
\end{equation*}%
Hence 
\begin{equation}
-\infty <\underset{u\in Int\Omega }{\inf }J(u)<0.  \label{EVP_2}
\end{equation}%
By (\ref{EVP_1}) and (\ref{EVP_2}) we deduce that 
\begin{equation*}
\underset{u\in Int\Omega }{\inf }J(u)<\underset{u\in \partial \Omega }{\inf }%
J(u).
\end{equation*}%
Remaining part of the proof is based on the relevant result from \cite{art4,
MRT} but since in the source mentioned it is derived for discrete BVP we
decided to provide it in our setting for reader's convenience. Choose $%
\varepsilon >0$\ such that 
\begin{equation}
\underset{u\in \partial \Omega }{\inf }J(u)-\underset{u\in Int\Omega }{\inf }%
J(u)>\varepsilon .  \label{epsil}
\end{equation}%
Applying Ekeland's variational principle, Theorem \ref{Ekeland}, to the
functional $J:\Omega \rightarrow \mathbb{R}$ we find $u_{\varepsilon }\in
\Omega $ such that%
\begin{equation}
J(u_{\varepsilon })\leq \underset{u\in \Omega }{\inf }J(u)+\varepsilon \text{
\ \ }  \label{E_inf2}
\end{equation}%
and 
\begin{equation*}
J(u_{\varepsilon })<J(u)+\varepsilon \left\Vert u-u_{\varepsilon
}\right\Vert \text{ \ for all }u\in \Omega \text{ with }u\neq u_{\varepsilon
},\bigskip
\end{equation*}%
with $\varepsilon >0$ satisfying (\ref{epsil}). By (\ref{epsil}) and (\ref%
{E_inf2}) we get 
\begin{equation*}
J(u_{\varepsilon })\underset{u\in \Omega }{\leq \inf }J(u)+\varepsilon \leq 
\underset{u\in Int\Omega }{\inf }J(u)+\varepsilon <\underset{u\in \partial
\Omega }{\inf }J(u).
\end{equation*}%
Thus $u_{\varepsilon }\in Int\Omega .$ Note that $u_{\varepsilon }$ is an
argument of a minimum for the functional $\Phi :\Omega \rightarrow \mathbb{R}
$ defined by%
\begin{equation*}
\Phi (u):=J(u)+\varepsilon \left\Vert u-u_{\varepsilon }\right\Vert ,
\end{equation*}%
so for any $v\in \Omega $ and a small enough real positive $h$ we have%
\begin{equation*}
\frac{J(u_{\varepsilon }+hv)-J(u_{\varepsilon })}{h}+\varepsilon \left\Vert
v\right\Vert \geq 0.
\end{equation*}%
Letting $h\rightarrow 0$ we obtain 
\begin{equation*}
\left\langle J^{\prime }(u_{\varepsilon }),v\right\rangle +\varepsilon
\left\Vert v\right\Vert \geq 0.
\end{equation*}%
The above inequality holds for any $v\in \Omega ,$ so 
\begin{equation*}
\left\vert \left\langle J^{\prime }(u_{\varepsilon }),v\right\rangle
\right\vert \leq \varepsilon \left\Vert v\right\Vert .
\end{equation*}%
Finally,%
\begin{equation*}
\left\Vert J^{\prime }(u_{\varepsilon })\right\Vert =\underset{\left\Vert
v\right\Vert \leq 1}{\sup }\frac{\left\vert \left\langle J^{\prime
}(u_{\varepsilon }),v\right\rangle \right\vert }{\left\Vert v\right\Vert }%
\leq \varepsilon .
\end{equation*}%
Putting $\varepsilon =\frac{1}{n}$ for sufficiently large natural $n$, we
see that there exists a sequence $\{u_{n}\}\subset Int\Omega $ such that%
\begin{equation*}
J\left( u_{n}\right) \rightarrow \underset{u\in \Omega }{\inf }J(u)\text{ }\ 
\text{and }J^{\prime }\left( u_{n}\right) \rightarrow 0
\end{equation*}%
as $n\rightarrow \infty .$ The sequence $\{u_{n}\}$ is bounded in $A,$ so
there exists $v_{0}\in A$ such that, up to a subsequence, $\{u_{n}\}$
converges to $v_{0}$ in $A$. Thus by the continuity of $J$ and $J^{\prime }$
we have 
\begin{equation*}
J\left( v_{0}\right) =\underset{u\in \Omega }{\inf }J(u)\text{ and }%
J^{\prime }\left( v_{0}\right) =0.
\end{equation*}%
The above relations together with Theorem \ref{critical_point} and Lemma \ref%
{positive} imply that $v_{0}$ is a positive solution to (\ref{uklad}).
\end{proof}

\section{\protect\bigskip Multiple solutions}

\subsection{Application of the Ekeland variational principle and mountain
pass geometry}

The relation $m_{2}^{+}<p^{-}$ (studied by the direct variational approach)
yields $m_{1}^{-}<p^{-}$. Using the technique described in \cite%
{BerJebMawhin} in case $m_{1}^{-}>\overline{p}^{+}$ we will show that
problem (\ref{uklad}) has at least two positive solutions for every
parameter $\lambda $ from interval $(0,\lambda _{2}).$ In this case the
functional $J$ is neither coercive nor anticoercive (for the functional
defined on a finite dimensional real Banach space the coercivity implies the
Palais-Smale condition)\textit{\ }but it satisfies the Palais-Smale
condition. In \cite{BerJebMawhin} the Authors use the Ekeland Variational
Principle together with the Mountain Pass Lemma.

Let us formulate an auxiliary result which provides the Palais-Smale
condition.

\begin{lemma}
\label{(P-S)}Let $m_{1}^{-}>\overline{p}^{+}.$ Assume that condition \textbf{%
(f.1)} is satisfied. Then the functional $J$ satisfies the Palais-Smale
condition.
\end{lemma}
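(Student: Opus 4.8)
The plan is to exploit that $A$ is finite dimensional, so that the Palais--Smale condition reduces to showing every Palais--Smale sequence is bounded; once boundedness is established, the Bolzano--Weierstrass theorem yields a convergent subsequence and we are done. Accordingly, I would fix a sequence $\{u_n\}\subset A$ with $|J(u_n)|\le M$ for some $M$ and $J'(u_n)\to 0$ in $A^{\ast}$, so that $\|J'(u_n)\|\le 1$ for large $n$. Since $u_n=u_{n+}-u_{n-}$ with $u_{n+}(x)u_{n-}(x)=0$ for every $x$, we have $\|u_n\|^{2}=\|u_{n+}\|^{2}+\|u_{n-}\|^{2}$, and it suffices to bound $\|u_{n+}\|$ and $\|u_{n-}\|$ separately. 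This split is forced on us because the nonlinearity only sees $u_{+}$, so $J$ is neither coercive nor anticoercive and the two parts must be controlled by different mechanisms.

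First I would bound the negative part. Testing the derivative (\ref{nr_poch}) against $v=u_{n-}$, I would use inequality (\ref{u_u_}) from the proof of Lemma \ref{positive} to see that the $p(\cdot)$-Laplacian bilinear term is $\le 0$, compute the potential term as $-\sum_{x\in S}q(x)u_{n-}(x)^{p(x)}$ (using $u(x)u_{-}(x)=-u_{-}(x)^{2}$), and note that the nonlinear term $-\lambda\sum_{x\in S}f(x,u_{n+}(x))u_{n-}(x)\le 0$ because $f>0$. Hence $\langle J'(u_n),u_{n-}\rangle\le -q^{-}\sum_{x\in S}u_{n-}(x)^{p(x)}$. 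Combining $q^{-}\sum_{x\in S}u_{n-}(x)^{p(x)}\le |\langle J'(u_n),u_{n-}\rangle|\le \|J'(u_n)\|\,\|u_{n-}\|\le \|u_{n-}\|$ with the lower bound \textbf{(a.4)} applied to $u_{n-}\in A$ (legitimate since $p^{-}\ge 2$), one obtains an inequality of the form $q^{-}(C\|u_{n-}\|^{p^{-}}-|S|)\le \|u_{n-}\|$, which, as $p^{-}\ge 2$, can hold for all large $n$ only if $\|u_{n-}\|$ remains bounded.

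The main step, and the main obstacle, is to bound the positive part; here the hypothesis $m_1^{-}>\overline{p}^{+}$ is essential. Suppose $\|u_{n+}\|\to\infty$. Using the lower estimate in \textbf{(F.1)}, namely $F(x,t)\ge \psi_1(x)t+\frac{\varphi_1(x)}{m_1(x)}t^{m_1(x)}$, together with the \textbf{(a.4)}-type bound for the exponent $m_1$ (again valid since $m_1^{-}\ge 2$), I would bound $\lambda\sum_{x\in S}F(x,u_{n+}(x))$ from below by a term of order $\|u_{n+}\|^{m_1^{-}}$. On the other hand the gradient term of $J$ is controlled from above by \textbf{(a.5)} (order $\|u_n\|^{\overline{p}^{+}}$) and the potential term by \textbf{(a.6)} (order $\|u_n\|^{p^{+}}$); since $\|u_{n-}\|$ is already bounded, $\|u_n\|\le \|u_{n+}\|+O(1)$, so both of these are of order $\|u_{n+}\|^{\overline{p}^{+}}$. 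Because $m_1^{-}>\overline{p}^{+}\ge p^{+}$, the negative contribution $-c\,\|u_{n+}\|^{m_1^{-}}$ dominates and forces $J(u_n)\to -\infty$, contradicting $|J(u_n)|\le M$. Therefore $\|u_{n+}\|$ is bounded as well.

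Combining the two bounds shows that $\{u_n\}$ is bounded in the finite dimensional space $A$, so a subsequence converges, which is precisely the Palais--Smale condition. The only delicate points are the sign bookkeeping of the mixed terms in the $p(\cdot)$-Laplacian after the decomposition $u=u_{+}-u_{-}$, handled by (\ref{u_u_}) and the positivity of $q$ and $f$, and the transfer of the superlinear estimates \textbf{(a.3)}--\textbf{(a.4)} from the exponent $p$ to the exponent $m_1$; both are routine given $m_1^{-}>\overline{p}^{+}\ge 2$.
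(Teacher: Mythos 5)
Your proposal is correct and follows essentially the same route as the paper: reduce to boundedness in the finite-dimensional space $A$, bound $\|u_{n}^{-}\|$ by testing $J'(u_n)$ against $u_{n}^{-}$ with the sign inequality (\ref{u_u_}), positivity of $q$ and $f$, and \textbf{(a.4)}, then bound $\|u_{n}^{+}\|$ by showing $J(u_n)\to-\infty$ via \textbf{(F.1)}, \textbf{(a.4)}--\textbf{(a.6)} and $m_1^{-}>\overline{p}^{+}$. The only (immaterial) differences are that the paper derives a slightly sharper bound on the gradient term before discarding it, and normalizes $\|u_n^{-}\|\geq q^{-}|S|$ to absorb the constant where you instead invoke $p^{-}\geq 2>1$ directly.
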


\begin{proof}
Assume that a sequence $\{u_{n}\}$ is such that $\{J(u_{n})\}$ is bounded
and $J^{\prime }(u_{n})\rightarrow 0$ as $n\rightarrow \infty $. Since the
space $A$ is finite dimensional, it is enough to show that $\{u_{n}\}$ is
bounded. Since $u_{n}(x)=u_{n}^{+}(x)-u_{n}^{-}(x)$ for all $n\in \mathbb{N}$
and all $x\in S$, it is enough to show that $\left\{ u_{n}^{+}\right\} $ and 
$\left\{ u_{n}^{-}\right\} $ are bounded.

Suppose that $\left\{ u_{n}^{-}\right\} $ is unbounded. Then we may assume
that there exists $N_{0}>0$ such that for all $n\geq N_{0}$ we have 
\begin{equation}
\left\Vert u_{n}^{-}\right\Vert \geq q^{-}\left\vert S\right\vert .
\label{PS-q}
\end{equation}%
Analogously as (\ref{u_u_}) we can show that 
\begin{equation}
\left( u_{+}(y)-u_{+}(x)\right) \left( u_{-}(y)-u_{-}(x)\right) \leq 0\text{
\ \ for every }\ x,y\in \overline{S}.  \label{u+u-}
\end{equation}%
Note also that 
\begin{equation}
\left\vert u_{-}(y)-u_{-}(x)\right\vert \leq \left\vert u(y)-u(x)\right\vert 
\text{ \ \ for every \ }\ x,y\in \overline{S}.  \label{u_u}
\end{equation}%
Using (\ref{u+u-}) and (\ref{u_u}) we obtain%
\begin{equation*}
\begin{array}{l}
\dsum\limits_{x,y\in \overline{S}}\left\vert u(y)-u(x)\right\vert
^{p(x)-2}(u(y)-u(x))(u_{-}(y)-u_{-}(x))=\bigskip \\ 
\dsum\limits_{x,y\in \overline{S}}\left\vert u(y)-u(x)\right\vert
^{p(x)-2}(u_{+}(y)-u_{+}(x))(u_{-}(y)-u_{-}(x))-\bigskip \\ 
\dsum\limits_{x,y\in \overline{S}}\left\vert u(y)-u(x)\right\vert
^{p(x)-2}\left( u_{-}(y)-u_{-}(x)\right) (u_{-}(y)-u_{-}(x))\leq \bigskip \\ 
\bigskip -\dsum\limits_{x,y\in \overline{S}}\left\vert u(y)-u(x)\right\vert
^{p(x)-2}\left( u_{-}(y)-u_{-}(x)\right) ^{2}\leq -\dsum\limits_{x,y\in 
\overline{S}}\left\vert u_{-}(y)-u_{-}(x)\right\vert ^{p(x)}.%
\end{array}%
\end{equation*}%
Moreover,%
\begin{equation*}
\begin{array}{l}
\dsum\limits_{x\in S}q(x)\left\vert u_{n}(x)\right\vert
^{p(x)-2}u_{n}(x)u_{n}^{-}(x)=\bigskip \\ 
\dsum\limits_{x\in S}q(x)\left\vert u_{n}(x)\right\vert
^{p(x)-2}(u_{n}^{+}(x)-u_{n}^{-}(x))u_{n}^{-}(x)=\bigskip \\ 
-\dsum\limits_{x\in S}q(x)\left\vert u_{n}(x)\right\vert
^{p(x)-2}(u_{n}^{-}(x))^{2}=-\dsum\limits_{x\in S}q(x)\left\vert
u_{n}^{-}(x)\right\vert ^{p(x)}.%
\end{array}%
\end{equation*}%
Bearing in mind (\ref{nr_poch}) the above relations lead to 
\begin{equation}
\begin{array}{l}
\left\langle J^{\prime }(u_{n}),u_{n}^{-}\right\rangle \leq \bigskip -\frac{1%
}{2}\dsum\limits_{x,y\in \overline{S}}\left\vert
u_{n}^{-}(y)-u_{n}^{-}(x)\right\vert ^{p(x)}\omega (x,y)-\text{ \ \ \ \ } \\ 
\dsum\limits_{x\in S}q(x)\left\vert u_{n}^{-}(x)\right\vert ^{p(x)}-\lambda
\dsum\limits_{x\in S}f(x,u_{n}^{+}(x))u_{n}^{-}(x)\leq \bigskip \\ 
-\dsum\limits_{x\in S}q(x)\left\vert u_{n}^{-}(x)\right\vert ^{p(x)}.%
\end{array}
\label{qq}
\end{equation}%
On the other hand by \textbf{(a.4)} we have%
\begin{equation}
\dsum\limits_{x\in S}q(x)\left\vert u_{n}^{-}(x)\right\vert ^{p(x)}\geq
q^{-}\left( 2^{-\frac{p^{-}}{2}}\left\vert \partial S\right\vert ^{\frac{%
p^{-}}{2}}\left\vert \overline{S}\right\vert ^{1-p^{-}}\left\Vert
u_{n}^{-}\right\Vert ^{p^{-}}-\left\vert S\right\vert \right) .  \label{qqq}
\end{equation}%
Thus by (\ref{qq}), (\ref{qqq}) and the Schwartz inequality\ we deduce that 
\begin{equation*}
q^{-}\left( 2^{-\frac{p^{-}}{2}}\left\vert \partial S\right\vert ^{\frac{%
p^{-}}{2}}\left\vert \overline{S}\right\vert ^{1-p^{-}}\left\Vert
u_{n}^{-}\right\Vert ^{p^{-}}-\left\vert S\right\vert \right) \leq \langle
J^{\prime }(u_{n}),-u_{n}^{-}\rangle \leq \Vert J^{\prime }(u_{n})\Vert
\cdot \Vert u_{n}^{-}\Vert .
\end{equation*}%
In a consequence by (\ref{PS-q}) we get 
\begin{equation*}
\begin{array}{l}
q^{-}2^{-\frac{p^{-}}{2}}\left\vert \partial S\right\vert ^{\frac{p^{-}}{2}%
}\left\vert \overline{S}\right\vert ^{1-p^{-}}\left\Vert
u_{n}^{-}\right\Vert ^{p^{-}}\leq \bigskip \\ 
\Vert J^{\prime }(u_{n})\Vert \cdot \Vert u_{n}^{-}\Vert +q^{-}\left\vert
S\right\vert \leq \bigskip \\ 
\Vert J^{\prime }(u_{n})\Vert \cdot \Vert u_{n}^{-}\Vert +\Vert
u_{n}^{-}\Vert \leq \left( \Vert J^{\prime }(u_{n})\Vert +1\right) \Vert
u_{n}^{-}\Vert .%
\end{array}%
\end{equation*}%
By the above, since for some fixed $\varepsilon >0$ there exists $N_{1}\geq
N_{0}$ such that $\Vert J^{\prime }(u_{n})\Vert <\varepsilon $ for every $%
n\geq N_{1}$, we get 
\begin{equation*}
\Vert u_{n}^{-}\Vert ^{p^{-}-1}\leq \frac{\left( \varepsilon +1\right) }{%
q^{-}2^{-\frac{p^{-}}{2}}\left\vert \partial S\right\vert ^{\frac{p^{-}}{2}%
}\left\vert \overline{S}\right\vert ^{1-p^{-}}}\text{.}
\end{equation*}%
Contradiction. This means that $\left\{ u_{n}^{-}\right\} $ is bounded.$%
\bigskip $

It remains to show that $\{u_{n}^{+}\}$ is bounded. Suppose that $%
\{u_{n}^{+}\}$ is unbounded. By\textbf{\ (F.1) }and \textbf{(a.4)} for
sufficiently large $\left\Vert u_{n}^{+}\right\Vert $ we obtain 
\begin{equation}
\dsum\limits_{x\in S}F(x,u_{n}^{+}(x))\geq \frac{\varphi _{1}^{-}}{m_{1}^{+}}%
\left( 2^{-\frac{m_{1}^{-}}{2}}\left\vert \partial S\right\vert ^{\frac{%
m_{1}^{-}}{2}}\left\vert \overline{S}\right\vert ^{1-m_{1}^{-}}\left\Vert
u_{n}^{+}\right\Vert ^{m_{1}^{-}}-\left\vert S\right\vert \right) +\psi
_{1}^{-}\dsum\limits_{x\in S}u_{n}^{+}(x).  \label{PS1}
\end{equation}%
By \textbf{(a.5)},{\ }\textbf{(a.6)} and (\ref{PS1}) we get 
\begin{equation*}
\begin{array}{l}
J(u_{n})\leq \frac{1}{2\overline{p}^{-}}\overline{\omega }^{+}\left( 2^{%
\overline{p}^{+}}\left\vert \overline{S}\right\vert \left\vert S\right\vert
\left\Vert u_{n}^{+}-u_{n}^{-}\right\Vert ^{\overline{p}^{+}}+\left\vert 
\overline{S}\right\vert ^{2}\right) +\frac{q^{+}}{p^{-}}\left( \left\vert
S\right\vert \left\Vert u_{n}^{+}-u_{n}^{-}\right\Vert ^{p^{+}}+\left\vert
S\right\vert \right) -\bigskip \\ 
\lambda \left( \frac{\varphi _{1}^{-}}{m_{1}^{+}}2^{-\frac{m_{1}^{-}}{2}%
}\left\vert \partial S\right\vert ^{\frac{m_{1}^{-}}{2}}\left\vert \overline{%
S}\right\vert ^{1-m_{1}^{-}}\left\Vert u_{n}^{+}\right\Vert ^{m_{1}^{-}}-%
\frac{\varphi _{1}^{-}}{m_{1}^{+}}\left\vert S\right\vert +\psi
_{1}^{-}\dsum\limits_{x\in S}u_{n}^{+}(x)\right) \leq \bigskip \\ 
\frac{1}{2\overline{p}^{-}}\overline{\omega }^{+}\left( 2^{\overline{p}%
^{+}\left( \overline{p}^{+}-1\right) }\left\vert \overline{S}\right\vert
\left\vert S\right\vert \left( \left\Vert u_{n}^{+}\right\Vert ^{\overline{p}%
^{+}}+\left\Vert u_{n}^{-}\right\Vert ^{\overline{p}^{+}}\right) +\left\vert 
\overline{S}\right\vert ^{2}\right) +\bigskip \\ 
\frac{q^{+}}{p^{-}}\left( 2^{p^{+}-1}\left\vert S\right\vert \left(
\left\Vert u_{n}^{+}\right\Vert ^{\overline{p}^{+}}+\left\Vert
u_{n}^{-}\right\Vert ^{\overline{p}^{+}}\right) +\left\vert S\right\vert
\right) -\bigskip \\ 
\lambda \left( \frac{\varphi _{1}^{-}}{m_{1}^{+}}2^{-\frac{m_{1}^{-}}{2}%
}\left\vert \partial S\right\vert ^{\frac{m_{1}^{-}}{2}}\left\vert \overline{%
S}\right\vert ^{1-m_{1}^{-}}\left\Vert u_{n}^{+}\right\Vert ^{m_{1}^{-}}-%
\frac{\varphi _{1}^{-}}{m_{1}^{+}}\left\vert S\right\vert +\psi
_{1}^{-}\dsum\limits_{x\in S}u_{n}^{+}(x)\right) .%
\end{array}%
\end{equation*}%
Since $m_{1}^{-}>\overline{p}^{+}$ {and $\{u_{n}^{+}\}$ is unbounded and $%
\{u_{n}^{-}\}$ is bounded, so }$J(u_{n})\rightarrow -\infty $ as $\Vert
u_{n}^{+}\Vert \rightarrow \infty $. Thus we obtain a contradiction with the
assumption that $\{J(u_{n})\}$ is bounded, so it follows that $\{u_{n}^{+}\}$
is bounded. Hence $\{u_{n}\}$ is bounded.
\end{proof}

At the end of Section \ref{SecPrelimResults} we indicated that results from 
\cite{BerJebMawhin} could be applied in order to get multiple solutions. Now
we are going to apply these for the problem under consideration. Recall that%
\begin{equation*}
\lambda _{2}:=\frac{\frac{q^{-}}{p^{+}}2^{-\frac{p^{+}}{2}}\left\vert
\partial S\right\vert ^{\frac{p^{+}}{2}}\left\vert \overline{S}\right\vert
^{1-p^{+}}\left\vert \overline{S}\right\vert ^{-\frac{p^{+}}{2}}}{\left( 
\frac{\varphi _{2}^{+}}{m_{2}^{-}}\left\vert \overline{S}\right\vert ^{-%
\frac{m_{2}^{-}}{2}}+\psi _{2}^{+}\right) \left\vert S\right\vert }
\end{equation*}

and 
\begin{equation*}
\Omega :=\left\{ u\in A:\left\Vert u\right\Vert \leq \left\vert \overline{S}%
\right\vert ^{-\frac{1}{2}}\right\} .
\end{equation*}

\begin{theorem}
\label{The_one_solution}Let $m_{1}^{-}>\overline{p}^{+}.$ Assume that
condition \textbf{(f.1)} is satisfied. Then for any $\lambda \in (0,\lambda
_{2})$ problem (\ref{uklad}) has at least two distinct positive solutions.
\end{theorem}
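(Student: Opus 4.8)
The plan is to combine a local minimizer, obtained as in the proof of Theorem \ref{Ekeland}, with a second critical point of mountain pass type, exactly along the scheme from \cite{BerJebMawhin} recalled at the end of Section \ref{SecPrelimResults}. I first observe that the hypothesis $m_{1}^{-}>\overline{p}^{+}$ forces $m_{1}^{+}\geq m_{1}^{-}>p^{-}$, so in particular $p^{-}\neq m_{1}^{+}$ and the whole geometric picture established in the proof of Theorem \ref{Ekeland} is available for $\lambda \in (0,\lambda _{2})$:
\[
-\infty <\inf_{u\in Int\Omega }J(u)<0<\inf_{u\in \partial \Omega }J(u).
\]
Since $A$ is finite dimensional and $\Omega $ is compact, the infimum of the continuous functional $J$ over $\Omega $ is attained, and because all boundary values are strictly positive while some interior value is negative, the minimizer $v_{0}$ lies in $Int\Omega $; hence $v_{0}$ is a free critical point with $J(v_{0})<0$. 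This is exactly the solution furnished by Theorem \ref{Ekeland}, and by Theorem \ref{critical_point} together with Lemma \ref{positive} it is a first positive solution of (\ref{uklad}).

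To produce the second critical point I would apply the Mountain Pass Lemma, Lemma \ref{lem2}, taking as the bounded open neighbourhood of $u_{0}=0$ the ball $Int\Omega $. The Palais-Smale condition holds by Lemma \ref{(P-S)}, which is precisely where $m_{1}^{-}>\overline{p}^{+}$ is genuinely needed. From the definition (\ref{action}) one reads off $J(0)=0$, so $J(u_{0})=0<\inf_{u\in \partial \Omega }J(u)$. It then remains to locate a far end point $u_{1}=e\notin \Omega $ with $J(e)\leq 0$.

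For this I fix a function $w\in A$ that is strictly positive on $S$ and examine $J(tw)$ as $t\to +\infty $. For $t>0$ one has $(tw)_{+}=tw$ on $S$, and a direct computation shows that the difference term $\frac{1}{2}\int_{\overline{S}}\frac{1}{p}\nabla _{p,\omega }(tw)\circ \nabla _{\omega }(tw)$ and the term $\int_{S}\frac{1}{p}q\left\vert tw\right\vert ^{p}$ both carry the factor $t^{p(x)}$ inside the sums and are therefore bounded above by a constant times $t^{\overline{p}^{+}}$ for $t\geq 1$, in the spirit of \textbf{(a.5)} and \textbf{(a.6)}. On the other hand the lower bound in \textbf{(F.1)} yields $\int_{S}F_{(tw)_{+}}\geq C\,t^{m_{1}^{-}}$ for $t$ large. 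Because $m_{1}^{-}>\overline{p}^{+}$, the nonlinear term dominates and $J(tw)\to -\infty $; thus for $t$ large enough both $J(tw)<0$ and $\left\Vert tw\right\Vert >\left\vert \overline{S}\right\vert ^{-1/2}$, so $e:=tw\notin \Omega $ does the job. Then $\max \{J(0),J(e)\}\leq 0<\inf_{u\in \partial \Omega }J(u)$, and since every path joining $0$ to $e$ must meet $\partial \Omega $, the mountain pass value $c=\inf_{\gamma \in \Gamma }\max_{t}J(\gamma (t))$ satisfies $c\geq \inf_{u\in \partial \Omega }J(u)>0$. Lemma \ref{lem2} then delivers a critical point $u^{\star }$ with $J(u^{\star })=c>0$.

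Finally $v_{0}$ and $u^{\star }$ are distinct because $J(v_{0})<0<J(u^{\star })$, and each is a solution of (\ref{uklad2+}) by Theorem \ref{critical_point}, hence a positive solution of (\ref{uklad}) by Lemma \ref{positive}. The step I expect to be the main obstacle is the anticoercivity estimate for $J(tw)$: one must ensure that the graph $p(\cdot )$-Laplacian and the $q$-term grow at most like $t^{\overline{p}^{+}}$ uniformly in the weight $\omega $ and the edge structure, so that this growth is genuinely beaten by the $t^{m_{1}^{-}}$ coming from \textbf{(F.1)}. This is exactly where the anisotropic, weighted nature of the problem and the inequalities of Section \ref{SecIneq} are essential.
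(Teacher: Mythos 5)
Your proposal is correct and takes essentially the same route as the paper: the Palais--Smale condition from Lemma \ref{(P-S)}, the mountain-pass geometry around the ball $\Omega$ inherited from the proof of Theorem \ref{Ekeland} (in particular $J(0)=0<\inf_{u\in \partial \Omega }J(u)$ for $\lambda <\lambda _{2}$), a far point where $J$ is negative thanks to $m_{1}^{-}>\overline{p}^{+}$ (the paper uses the constant function $u_{\xi }\equiv \xi $ on $S$ where you take $tw$), and the two-critical-point scheme of \cite{BerJebMawhin}. The only cosmetic difference is that the paper does not invoke the strict inequality $\inf_{u\in Int\Omega }J(u)<0$ at this stage --- the weaker fact $J(0)=0<\inf_{u\in \partial \Omega }J(u)$ already places the constrained minimizer in the interior and separates the two critical values.
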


\begin{proof}
By Lemma \ref{(P-S)} the functional $J$ satisfies the Palais-Smale
condition. Let $\lambda \in (0,\lambda _{2})$ be fixed. Note that $\overline{%
Int\Omega }=\Omega .$ Since $0\in Int\Omega $, so by (\ref{EVP_1}) we deduce
that 
\begin{equation}
\min_{u\in \Omega }J(u)\leq J(0)=0<\min_{u\in \partial \Omega }J(u).
\label{J0<J_na_brzegu}
\end{equation}%
Thus we have relation (\ref{J_domX<J_brzegX}) satisfied.

Let $u_{\xi }\in A$ \ be defined as follows: $u_{\xi }(x)=\xi $ for all $%
x\in S$ and $u_{\xi }(0)=0$ for all $x\in \partial S$. Then for $\xi >1$ we
have%
\begin{equation*}
\begin{array}{l}
\dsum\limits_{x,y\in \overline{S}}|u(y)-u(x)|^{p(x)}=\bigskip \\ 
\dsum\limits_{x\in \partial S}\dsum\limits_{y\in
S}|u(y)-u(x)|^{p(x)}+\dsum\limits_{x\in S}\dsum\limits_{y\in \partial
S}|u(y)-u(x)|^{p(x)}=\bigskip \\ 
\dsum\limits_{x\in \partial S}|\xi -u(x)|^{p(x)}+\dsum\limits_{x\in
S}|u(x)|^{p(x)}\leq \left( \left\vert \partial S\right\vert +\left\vert
S\right\vert \right) \xi ^{\overline{p}^{+}}.%
\end{array}%
\end{equation*}%
Therefore%
\begin{equation*}
J(u_{\xi })\leq \frac{1}{2\overline{p}^{-}}(\left\vert \partial S\right\vert
+\left\vert S\right\vert )\left\vert \xi \right\vert ^{\overline{p}^{+}}%
\overline{\omega }^{+}+\frac{q^{+}}{p^{-}}\left\vert S\right\vert \left\vert
\xi \right\vert ^{p^{+}}-\lambda \left\vert S\right\vert \left( \frac{%
\varphi _{1}^{-}\xi ^{m_{1}^{-}}}{m_{1}^{+}}+\psi _{1}^{-}\xi \right)
\bigskip .
\end{equation*}%
Since $m_{1}^{-}>\overline{p}^{+}$, then $\lim_{\xi \rightarrow \infty
}J(u_{\xi })=-\infty $, so there exists $\xi _{0}$ such that $u_{\xi
_{0}}\in A\backslash \Omega $ and 
\begin{equation*}
J(u_{\xi _{0}})<\min_{u\in \partial \Omega }J(u).
\end{equation*}%
Thus by the remarks contained at the end of Section \ref{SecPrelimResults}
provide the assertion.
\end{proof}

Now we proceed with some suggestion of the alternative proof of Theorem \ref%
{The_one_solution} by using Corollary 3.2. from \cite{CANDITOADV} which says
that if a functional satisfying the Palais-Smale condition is unbounded from
below and has a local minimum then it has another critical point. From the
proof of Theorem \ref{The_one_solution} it follows that the functional $J$
has a local minimum on a ball%
\begin{equation*}
\Omega :=\left\{ u\in A:\left\Vert u\right\Vert \leq \left\vert \overline{S}%
\right\vert ^{-\frac{1}{2}}\right\} .
\end{equation*}%
From the proof of Lemma \ref{(P-S)} we see that the functional $J$ satisfies
the Palais-Smale condition. Moreover, from the proof of Theorem \ref%
{The_one_solution} it follows that the functional $J$ is unbounded from
below. To conclude, both methods require similar calculations to be
performed since both abstract results are based on similar tools.

\subsection{Application of the mountain pass geometry and Karush-Kuhn-Tucker
theorem}

When relation (\ref{J_domX<J_brzegX}) is not satisfied we cannot use the
arguments mentioned in Theorem \ref{The_one_solution} since this condition
is crucial since one solution is obtained via the Ekeland's Principle and it
must lie in the interior of the set, while the second one it reached through
the Mountain Pass Geometry. But we have some other tools at hand. So in this
subsection we apply Karush-Kuhn-Tucker theorem together with the mountain
pass geometry in order to obtain the existence of at least two distinct
positive solutions with at least one solution outside the unit ball. The
first minimizer we find using the Karush-Kuhn-Tucker conditions. The second
minimizer there exists by the mountain pass technique. Thus our ideas are
related to those contained in \cite{BerJebMawhin} since one solution is
reached by the mountain pass technique and the second by some other
technique which provides that it lies in the interior of the ball.

Put

\begin{equation*}
\gamma _{0}:=2^{\frac{1}{2}}\left\vert \partial S\right\vert ^{\frac{1}{2}%
}\left\vert \overline{S}\right\vert .
\end{equation*}

\begin{theorem}
\label{maintheorem}Let $m_{1}^{-}>\overline{p}^{+}.$ Assume that condition 
\textbf{(f.1)} is satisfied. Let us choose $\gamma >\gamma _{0}$ and put 
\begin{equation*}
\lambda _{3}:=\frac{q^{-}2^{-\frac{p^{-}}{2}}\left\vert \partial
S\right\vert ^{\frac{p^{-}}{2}}\left\vert \overline{S}\right\vert
^{1-p^{-}}\gamma ^{p^{-}}-q^{-}\left\vert S\right\vert }{\left( \varphi
_{2}^{+}\gamma ^{m_{2}^{+}}+\varphi _{2}^{+}+\psi _{2}^{+}\left\vert 
\overline{S}\right\vert ^{\frac{1}{2}}\gamma \right) \left\vert S\right\vert 
}.\ 
\end{equation*}%
Then for any $\lambda \in \left( 0,\lambda _{3}\right) $ problem (\ref{uklad}%
) has at least two distinct positive solutions with at least one positive
solution outside the unit ball.
\end{theorem}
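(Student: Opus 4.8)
The plan is to produce the two solutions by two different variational mechanisms on the closed ball $\Omega _{\gamma }:=\{u\in A:\left\Vert u\right\Vert \leq \gamma \}$, which is compact since $A$ is finite dimensional. Throughout I fix $\lambda \in (0,\lambda _{3})$ and rely on Lemma \ref{(P-S)} to guarantee that $J$ satisfies the Palais--Smale condition (the hypothesis $m_{1}^{-}>\overline{p}^{+}$ is exactly what is needed there). The first preliminary step is to show that the sphere $\partial \Omega _{\gamma }$ is a positive barrier, i.e. $\min_{\left\Vert u\right\Vert =\gamma }J(u)>0$. For this I bound the leading term $\sum_{S}q\left\vert u\right\vert ^{p}$ from below by \textbf{(a.4)} (evaluated at $\left\Vert u\right\Vert =\gamma $) and the nonlinear term $\sum_{S}F(x,u_{+})$ from above by \textbf{(F.1)} together with \textbf{(a.6)} and (\ref{u+norma}); the definition of $\lambda _{3}$ is designed precisely so that for $\lambda <\lambda _{3}$ the positive lower bound dominates, giving $J(u)>0$ on $\partial \Omega _{\gamma }$, and then the Weierstrass theorem on the compact sphere yields the strict positive infimum.

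For the first solution I would minimise $J$ over $\Omega _{\gamma }$ and invoke the Karush--Kuhn--Tucker theorem (Theorem \ref{KKT-THEO}) with objective $J_{0}=J$ and the single constraint $J_{1}(u)=\left\Vert u\right\Vert ^{2}-\gamma ^{2}\leq 0$. Since $\Omega _{\gamma }$ is compact and $J$ is continuous, a minimiser $x_{0}$ exists, and Slater's condition holds because the open ball is a nonempty interior of $\{J_{1}\leq 0\}$, so one may fix $\lambda _{0}=1$. The key point is that $x_{0}$ is interior: using a test function concentrated at a single vertex with a small value (as in the proof of Theorem \ref{Ekeland}) one obtains a point with $J<0$, whence $\min_{\Omega _{\gamma }}J<0<\min_{\partial \Omega _{\gamma }}J$ and $J_{1}(x_{0})<0$. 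Complementary slackness then forces the multiplier of $J_{1}$ to vanish, leaving $J^{\prime }(x_{0})=0$; by Theorem \ref{critical_point} and Lemma \ref{positive} (recall that \textbf{(f.1)} implies \textbf{(f.0)}) the point $x_{0}$ is a positive solution of (\ref{uklad}).

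For the second solution I would apply the Mountain Pass Lemma (Lemma \ref{lem2}) with $u_{0}=0$, so that $J(0)=0$, and $u_{1}=u_{\xi _{0}}$, the function equal to a large constant $\xi _{0}$ on $S$ and $0$ on $\partial S$. Since $m_{1}^{-}>\overline{p}^{+}$, the estimate of $J(u_{\xi })$ already computed in the proof of Theorem \ref{The_one_solution} shows $J(u_{\xi })\rightarrow -\infty $, so $\xi _{0}$ can be chosen with $u_{\xi _{0}}\notin \overline{\Omega _{\gamma }}$ and $J(u_{\xi _{0}})<0$. Then $\max \{J(u_{0}),J(u_{1})\}=0<\min_{\partial \Omega _{\gamma }}J$, the barrier established above, so the lemma produces $u^{\star }$ with $J^{\prime }(u^{\star })=0$ and $J(u^{\star })=c>0$; again Theorem \ref{critical_point} and Lemma \ref{positive} make it a positive solution. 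Because $J(u^{\star })=c>0>J(x_{0})$, the two solutions are automatically distinct.

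The remaining and, in my view, most delicate point is to locate at least one solution outside the unit ball $B_{1}=\{\left\Vert u\right\Vert \leq 1\}$, and this is exactly where $\gamma >\gamma _{0}$ enters. The plan is to show $\min_{\Omega _{\gamma }}J<\inf_{B_{1}}J$, which forces the minimiser $x_{0}$ to satisfy $\left\Vert x_{0}\right\Vert >1$. On $B_{1}$ one has $F\geq 0$, so $J$ is bounded below there by a constant multiple of $-\lambda $ (controlling the nonlinear part by \textbf{(F.1)}, \textbf{(a.6)} and (\ref{u+norma})), while in the annulus $1<\left\Vert u\right\Vert \leq \gamma $ the dominance $m_{1}^{-}>\overline{p}^{+}$ lets one exhibit a test function $w$ (again of the form $u_{\xi }$, now with $\left\Vert w\right\Vert $ chosen strictly between $1$ and $\gamma $) at which the $-\xi ^{m_{1}^{-}}$ contribution drives $J(w)$ strictly below that lower bound. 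The threshold $\gamma _{0}=2^{1/2}\left\vert \partial S\right\vert ^{1/2}\left\vert \overline{S}\right\vert $ is precisely the radius needed so that such a $w$ fits inside $\Omega _{\gamma }$; matching these constants, and confirming that $x_{0}$ (rather than merely $u^{\star }$) is the one landing outside $B_{1}$, is the main computational obstacle, the rest being assembled from the already-proved barrier estimate, Lemma \ref{(P-S)}, and Lemmas \ref{critical_point} and \ref{positive}.
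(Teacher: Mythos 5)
Your overall architecture (Karush--Kuhn--Tucker for a constrained minimizer plus the Mountain Pass Lemma for a second critical point, with Lemma \ref{(P-S)} supplying the Palais--Smale condition) is the same as the paper's, but your choice of constraint region breaks the part of the statement you yourself flag as delicate. The paper does not minimize over the full ball $\{\left\Vert u\right\Vert \leq \gamma \}$: it minimizes over the annulus $\Omega =\Omega _{1}\cap \Omega _{2}$ with $\Omega _{1}=\{\left\Vert u\right\Vert \leq \gamma \}$ and $\Omega _{2}=\{\left\Vert u\right\Vert \geq \zeta \}$, $\zeta \in (1,\gamma )$, and applies KKT with the two constraints $\left\Vert u\right\Vert ^{2}-\gamma ^{2}\leq 0$ and $\zeta ^{2}-\left\Vert u\right\Vert ^{2}\leq 0$ to rule out the outer sphere. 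Any minimizer of $J$ over that annulus has norm at least $\zeta >1$, so the ``solution outside the unit ball'' comes for free from the geometry of the constraint set. Your replacement step --- proving $\min_{\Omega _{\gamma }}J<\inf_{B_{1}}J$ so that the global minimizer over the whole ball escapes $B_{1}$ --- is not a missing computation but a false expectation: on all of $A$ one has $J(u)\geq -\lambda \sum_{x\in S}F(x,u_{+}(x))$, so on the annulus $1<\left\Vert u\right\Vert \leq \gamma$ the infimum of $J$ is bounded below by $-\lambda C_{\gamma }$ while any test function there (e.g. $u_{\xi }$) carries $\lambda $-independent positive contributions from the gradient and potential terms; meanwhile the small-spike construction from the proof of Theorem \ref{Ekeland} already makes $\inf_{B_{1}}J<0$. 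For the small values of $\lambda $ permitted by $\lambda _{3}$ the minimizer over the full ball should be expected to sit \emph{inside} the unit ball, so your inequality goes the wrong way.

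A second, related mismatch: you assert that $\lambda <\lambda _{3}$ forces $\min_{\partial \Omega _{\gamma }}J>0$, which you need because your mountain-pass base point is $u_{0}=0$ with $J(0)=0$. But $\lambda _{3}$ is calibrated to the KKT stationarity identity, i.e. to an estimate on $\sum_{x\in S}f(x,u_{0}^{+}(x))u_{0}(x)$ via \textbf{(f.1)}, (\ref{kkt5}) and (\ref{kkt6}); it does not carry the factors $1/p^{+}$ and $1/m_{2}^{-}$ that appear when one bounds $J$ itself through \textbf{(F.1)} and \textbf{(a.4)}. A sign estimate $J>0$ on the sphere of radius $\gamma $ requires a different (in general smaller) threshold, so your barrier claim is not justified for all $\lambda \in (0,\lambda _{3})$. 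The paper sidesteps this entirely: it never needs $J>0$ on $\partial \Omega _{1}$, only $J(u_{0})<\min_{\partial \Omega _{1}}J$ and $J(u_{\xi _{0}})<\min_{\partial \Omega _{1}}J$, the first coming out of the KKT contradiction and the second from $J(u_{\xi })\rightarrow -\infty $. To repair your argument you should adopt the annular constraint set and run KKT with both constraints as the paper does.
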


\begin{proof}
Let $\lambda \in (0,\lambda _{3})$ be fixed. Note that $\gamma _{0}>1.$ Put 
\begin{equation*}
\Omega _{1}:=\left\{ u\in A:\left\Vert u\right\Vert \leq \gamma \right\} ;%
\text{ \ \ }\Omega _{2}:=\left\{ u\in A:\left\Vert u\right\Vert \geq \zeta
\right\} ,
\end{equation*}%
where $\zeta \in (1,\gamma ).$\ Put%
\begin{equation*}
\Omega :=\Omega _{1}\cap \Omega _{2}.
\end{equation*}%
\qquad The set $\Omega $ is bounded and closed, so the classical Weierstrass
theorem implies that the functional $J$ attains a minimum in $\Omega .$
Assume that $u_{0}\in A$\ is a local minimizer of $J$\ in\textit{\ }$\Omega $%
. We will show, by a contradiction, that $u_{0}$ is the element required by
the Mountain Pass Lemma, that is $u_{0}\notin \partial \Omega _{1}$. Suppose
otherwise, that $u_{0}\in \partial \Omega _{1}$.

Applying the Karush-Kuhn-Tucker theorem, Theorem \ref{KKT-THEO}, to the
problem 
\begin{equation*}
\underset{u\in A}{\min }J(u)
\end{equation*}%
subject to the constrains 
\begin{equation*}
\left\{ 
\begin{array}{c}
\left\Vert u\right\Vert ^{2}-\gamma ^{2}\leq 0; \\ 
\zeta ^{2}-\left\Vert u\right\Vert ^{2}\leq 0,%
\end{array}%
\right.
\end{equation*}%
we deduce that there exist constants $\kappa ,\sigma ,\vartheta \geq 0$ do
not vanish simultaneously such that 
\begin{equation}
\sigma (\left\Vert u_{0}\right\Vert ^{2}-\gamma ^{2})=0\text{ \ \ \ \ \ and
\ \ \ \ \ }\vartheta (\zeta ^{2}-\left\Vert u_{0}\right\Vert ^{2})=0
\label{slackness}
\end{equation}%
and%
\begin{equation}
\kappa \langle J^{\prime }(u_{0}),v\rangle +\sigma \langle u_{0},v\rangle
-\vartheta \langle u_{0},v\rangle =0\bigskip  \label{kkt}
\end{equation}%
for all $v\in A.$

The set $\left\{ u\in \Omega :\left\Vert u\right\Vert ^{2}-\gamma ^{2}\leq 0%
\text{ and }\zeta ^{2}-\left\Vert u\right\Vert ^{2}\leq 0\right\} $ has a
non-empty interior, so we may put $\kappa =1.$ By (\ref{slackness}) we
deduce that $\vartheta =0$, since $\left\Vert u_{0}\right\Vert =\gamma \neq $%
\texttt{\ }$\zeta $ and so $\zeta ^{2}-\left\Vert u_{0}\right\Vert ^{2}\neq $%
\texttt{\ }$0$. Now suppose that $\sigma >0.$ Then by (\ref{kkt}) and (\ref%
{nr_poch}) we get%
\begin{equation*}
\begin{array}{l}
\frac{1}{2}\dsum\limits_{x,y\in \overline{S}}\left\vert
u_{0}(y)-u_{0}(x)\right\vert ^{p(x)-2}(u_{0}(y)-u_{0}(x))(v(y)-v(x))\omega
(x,y)+\bigskip \\ 
\underset{x\in S}{\tsum }q(x)\left\vert u_{0}(x)\right\vert
^{p(x)-2}u_{0}(x)v(x)-\lambda \underset{x\in S}{\tsum }%
f(x,u_{0}^{+}(x))v(x)+\sigma \underset{x\in S}{\tsum }\langle u_{0}\left(
x\right) ,v\left( x\right) \rangle =0.%
\end{array}%
\end{equation*}%
for all $v\in A.$ Taking $v=u_{0}$ we see that%
\begin{equation}
\begin{array}{l}
\frac{1}{2}\dsum\limits_{x,y\in \overline{S}}\left\vert
u_{0}(y)-u_{0}(x)\right\vert ^{p(x)}\omega (x,y)+\dsum\limits_{x\in
S}q(x)\left\vert u_{0}(x)\right\vert ^{p(x)}+\sigma \left\Vert
u_{0}\right\Vert ^{2}=\bigskip \\ 
\lambda \dsum\limits_{x\in S}f(x,u_{0}^{+}(x))u_{0}(x).%
\end{array}
\label{kkt1}
\end{equation}%
Since $u_{0}\in \partial \Omega _{1},$ we see that $\left\Vert
u_{0}\right\Vert =\gamma $. Thus by \textbf{(a.4) }we obtain%
\begin{equation}
\begin{array}{l}
\frac{1}{2}\dsum\limits_{x,y\in \overline{S}}\left\vert
u_{0}(y)-u_{0}(x)\right\vert ^{p(x)}\omega (x,y)+\dsum\limits_{x\in
S}q(x)\left\vert u_{0}(x)\right\vert ^{p(x)}+\sigma \left\Vert
u_{0}\right\Vert ^{2}\geq \bigskip \\ 
q^{-}\left( 2^{-\frac{p^{-}}{2}}\left\vert \partial S\right\vert ^{\frac{%
p^{-}}{2}}\left\vert \overline{S}\right\vert ^{1-p^{-}}\gamma
^{p^{-}}-\left\vert S\right\vert \right) +\sigma \gamma ^{2}.%
\end{array}
\label{kkt2}
\end{equation}%
Note that the term on the right is positive, since $\gamma >\gamma _{0}.$

By \textbf{(a.6) }we get%
\begin{equation}
\dsum\limits_{x\in S}\left\vert u_{0}^{+}(x)\right\vert ^{m_{2}(x)}\leq
\dsum\limits_{x\in S}\left\vert u_{0}(x)\right\vert ^{m_{2}(x)}\leq
\left\vert S\right\vert \left\Vert u_{0}\right\Vert ^{m_{2}^{+}}+\left\vert
S\right\vert =\left( \gamma ^{m_{2}^{+}}+1\right) \left\vert S\right\vert .
\label{kkt5}
\end{equation}%
Moreover by (\ref{u+norma}) we obtain%
\begin{equation}
\dsum\limits_{x\in S}u_{0}^{+}(x)\leq \left\vert S\right\vert \left\vert 
\overline{S}\right\vert ^{\frac{1}{2}}\left\Vert u_{0}\right\Vert
=\left\vert S\right\vert \left\vert \overline{S}\right\vert ^{\frac{1}{2}%
}\gamma .  \label{kkt6}
\end{equation}%
By \textbf{(f.1), (\ref{kkt5}) }and (\ref{kkt6}) we infer that%
\begin{equation}
\dsum\limits_{x\in S}f(x,u_{0}^{+}(x))u_{0}(x)\leq \dsum\limits_{x\in
S}f(x,u_{0}^{+}(x))u_{0}^{+}(x)\leq \left( \varphi _{2}^{+}\gamma
^{m_{2}^{+}}+\varphi _{2}^{+}+\psi _{2}^{+}\left\vert \overline{S}%
\right\vert ^{\frac{1}{2}}\gamma \right) \left\vert S\right\vert .
\label{kkt3}
\end{equation}%
Thus by (\ref{kkt1}), (\ref{kkt2}) and (\ref{kkt3}) we get%
\begin{equation*}
q^{-}2^{-\frac{p^{-}}{2}}\left\vert \partial S\right\vert ^{\frac{p^{-}}{2}%
}\left\vert \overline{S}\right\vert ^{1-p^{-}}\gamma
^{p^{-}}-q^{-}\left\vert S\right\vert +\sigma \gamma \leq \lambda \left(
\varphi _{2}^{+}\gamma ^{m_{2}^{+}}+\varphi _{2}^{+}+\psi _{2}^{+}\left\vert 
\overline{S}\right\vert ^{\frac{1}{2}}\gamma \right) \left\vert S\right\vert
.
\end{equation*}%
{A contradiction with the assumption }$\lambda \in \left( 0,\lambda
_{3}\right) $.

Eventually $\vartheta =\sigma =0$ and $\kappa \neq 0.$ Therefore $%
u_{0}\notin \partial \Omega _{1},$ so 
\begin{equation*}
J(u_{0})<\min_{u\in \partial \Omega _{1}}J(u).
\end{equation*}

Moreover (see the proof of Theorem \ref{The_one_solution}) there exists $%
u_{\xi _{0}}\in A\setminus \Omega _{1}$ such that $J(u_{\xi
_{0}})<\min_{u\in \partial \Omega _{1}}J(u).$

By Lemma \ref{(P-S)} and Lemma \ref{lem2} we obtain a critical value\ of the
functional $J$ for some $u^{\star }\in A.$ Moreover $u_{0}$ and $u^{\star }$
are two different critical points of $J$ and therefore by Lemma \ref%
{positive} there are two distinct positive solutions to (\ref{uklad}).$%
\bigskip $
\end{proof}

\begin{remark}
We note that the closer $\gamma $ to $\gamma _{0}$ in the above theorem, the
eigenvalue interval becomes larger.
\end{remark}

\begin{example}
Let $S=\{x_{1,}x_{2},x_{3}\},$ $\partial S=\{x_{4,}x_{5},x_{6}\}$ and put $%
\omega (x_{1,}x_{2})=\omega (x_{2},x_{3})=\omega (x_{3,}x_{1})=$ $\omega
(x_{1,}x_{4})=\omega (x_{2},x_{5})=\omega (x_{3,}x_{6})=a>0.$ Let $p:%
\overline{S}\rightarrow \mathbb{[}2,+\infty ),$ $m:S\rightarrow \lbrack
2,+\infty )$ and $q,\varphi ,\psi :S\rightarrow (0,+\infty )$ are given by
the formulas%
\begin{equation*}
m(x_{i})=2i^{2};\text{ }\varphi (x_{i})=3i-1;\text{ }\psi (x_{i})=i;\text{ }%
q(x_{i})=e^{i+31}\text{\ for }i=1,2,3
\end{equation*}%
and%
\begin{equation*}
p(x_{i})=i+3\text{ for }i=1,2,...,6.\text{\ }
\end{equation*}%
We have shown in the example \ref{przyklad} that the function $f:S\times 
\mathbb{R\rightarrow R}$ given by the formula 
\begin{equation*}
f(x,t)=(t+1)^{1-e^{-t^{2}}+m(x)}\left( \frac{2}{\pi }\arctan t+\varphi
(x)\right) +\left\vert \sin t\right\vert +\psi (x)+1
\end{equation*}%
satisfies condition \textbf{(f.1)} with $m_{1}(x)=m(x),$ $m_{2}(x)=m(x)+1,$ $%
\varphi _{1}(x)=\varphi (x),$ $\varphi _{2}(x)=2^{m(x)}\left( 1+\varphi
(x)\right) ,$ $\psi _{1}(x)=\psi (x)+1$ and $\psi _{2}(x)=\psi (x)+2.$%
\newline
Note that $18=m_{2}^{+}\nless p^{-}=4,$ so assumption of Theorem \ref%
{TheoDirect} are not satisfied, but $18=m_{1}^{+}\neq p^{-}=4$ so by Theorem %
\ref{Ekeland} we have that for all $\lambda \in (0,\lambda _{2}),$ where $%
\lambda _{2}=6.\,\allowbreak 382\,3\times 10^{-4},$ problem (\ref{uklad})
has at least one positive solution.\newline
Now put $m(x_{i})=10i$ for $i=1,2,3.$ Then $10=m_{1}^{-}>\overline{p}^{+}=9$
(assumption of Theorem \ref{TheoDirect} are not satisfied) and by Theorem %
\ref{The_one_solution} problem (\ref{uklad}) has at least two distinct
positive solutions for all $\lambda \in (0,\lambda _{2}),$ where $\lambda
_{2}=0.816\,.$ Put $\gamma $ $=14.7.$ Then by Theorem \ref{maintheorem} ($%
\gamma >\gamma _{0}=\allowbreak 14.\,\allowbreak 697)$ we have that\ for all 
$\lambda \in \left( 0,\lambda _{3}\right) ,$ where $\lambda
_{3}=2.\,\allowbreak 706\,5\times 10^{-20},$ problem (\ref{uklad}) has at
least two distinct positive solutions with at least one positive solution
outside the unit ball. \newline
If we put $m(x_{i})=2\sin ^{2}\frac{\pi i}{3}\,$, we have by Theorem \ref%
{TheoDirect} the existence of at least one positive solution for problem (%
\ref{uklad}) for all $\lambda >0,$ since $3=m_{2}^{+}<p^{-}=4.$
\end{example}

\begin{tabular}{l}
Marek Galewski, Renata Wieteska \\ 
Institute of Mathematics, \\ 
Technical University of Lodz, \\ 
Wolczanska 215, 90-924 Lodz, Poland, \\ 
marek.galewski@p.lodz.pl, renata.wieteska@p.lodz.pl%
\end{tabular}

\end{document}